\newcommand{\symdiff}{\bigtriangleup}
\DeclareMathOperator{\ecycle}{ecycle}
\newtheorem{theorem}{Theorem}[section]
\newtheorem{lemma}[theorem]{Lemma}
\newcounter{claim_nb}[theorem]
\newtheorem{claim}[claim_nb]{Claim}
\newenvironment{cproof}
{\begin{proof}
 [Proof.]
 \vspace{-1.2\parsep}}
{ \end{proof}}
\newcommand\lref[1]{Lemma~\ref{lem:#1}}
\newcommand\tref[1]{Theorem~\ref{thm:#1}}
\newcommand\sref[1]{Section~\ref{sec:#1}}
\newcommand\clref[1]{Claim~\ref{cl:#1}}
\newcommand\ag{AG(3,2)}
\newcommand\ex[1]{\mathcal{EX}(#1)}
\newcommand\exa{\ensuremath{\ex{AG(3,2)}}}
\newcommand\mm{{\mathcal M}}
\newcommand\rk{\text{rank}}
\begin{document}

\title{Maximum size binary matroids with no $AG(3,2)$-minor are graphic}

\author{Joseph P. S. Kung}
\address{Department of Mathematics, University of North Texas, Denton, TX 76203, USA}
\email{kung@unt.edu}

\author{Dillon Mayhew}
\address{School of Mathematics, Statistics and Operations Research, Victoria University of Wellington, Wellington, New Zealand}
\email{dillon.mayhew@msor.vuw.ac.nz}

\author{Irene Pivotto}
\address{School of Mathematics and Statistics, University of Western Australia, Nedlands WA 6009, 
Australia}
\email{irene.pivotto@uwa.edu.au}

\author{Gordon F. Royle}
\address{School of Mathematics and Statistics, University of Western Australia, Nedlands WA 6009, 
Australia}
\email{gordon.royle@uwa.edu.au}





\begin{abstract}
We prove that the maximum size of a simple binary matroid of rank $r \geq 5$ with no $AG(3,2)$-minor is $\binom{r+1}{2}$ and characterise those matroids achieving this bound. When $r \geq 6$, the graphic matroid $M(K_{r+1})$ is the unique matroid meeting the bound, but there are a handful of smaller examples. In addition, we determine the
size function for non-regular simple binary matroids with no $AG(3,2)$-minor and characterise the matroids of maximum size for each rank.\end{abstract}

\maketitle

\section{Introduction}

In his survey paper ``Extremal Matroid Theory'', Kung \cite{MR1224696} describes the subject as being primarily concerned with instances of the following problem:

\medskip

\noindent
{\sc Fundamental Problem} 
{\em 
Let $\mathcal M$ be a class of matroids satisfying given properties. Determine the size function 
\[
h(\mm;r) = \max\{|E(M)| : M \in \mm \text{ and } \rk(M) = r\}
\]
and characterise the matroids of maximum size for each rank.}

\medskip
For example, for the class ${\mathcal R}$ of  {\em simple regular matroids}, an early theorem of Heller \cite{MR0094381} implies that
\[
h(\mathcal{R}; r) = \binom{r+1}{2},
\]
and from this, it quickly follows that the maximum size regular matroids are graphic. 

There are various classes of matroids, such as ``all simple $GF(q)$-representable matroids'', for which the fundamental problem is essentially trivial, but other than these cases there are relatively few classes of non-graphic matroids\footnote{These questions for subclasses of graphic matroids are the subject of extremal graph theory, and not further addressed here.} for which the fundamental problem is completely solved.  In view of the importance of {\em minors} in structural matroid theory, most of the exact results known are for various minor-closed classes of matroids, and mostly for matroids representable over a fixed finite field.  Here the Growth Rate Theorem of Geelen, Kung and Whittle \cite{MR2482959}  implies that any proper minor-closed class of simple $GF(q)$-representable matroids that does not include all $GF(q')$-representable matroids for some $q' \mid q$ has either  a {\em linear} or {\em quadratic} size function. The former occurs if any graphic matroid is excluded from the class, and the latter otherwise. 
However, the Growth Rate Theorem is a qualitative theorem and, while it gives the ``shape'' of the size function, it does not provide an exact solution. The most important fixed finite field is the {\em binary field}, but even for binary matroids only a few exact results are known, and these are mostly consequences of exact structural descriptions of the relevant classes of matroids. A selection of these results are given in Table~\ref{tab:binarysizes}, where $\mathcal{EX}(M_1, M_2, \ldots)$ denotes the class of simple binary matroids with no $M_1$-minor, $M_2$-minor etc.  Although a few more such exact characterisations are known, these are mostly for even smaller classes of matroids defined by excluding even more small matroids.

\begin{table}[t]
\renewcommand{\arraystretch}{1.3}
\begin{tabular}{clp{6cm}}
\toprule
Class & Size function $h(r)$ & Maximum-sized matroids \\
\midrule 
\midrule
Cographic &
$3(r-1)$ &
\parbox{6cm}{
Bond matroids of $3$-connected cubic multigraphs (see \cite{MR850567}).}\\
\midrule 
$\mathcal{EX}(F_7, F_7^*)$ &
$\binom{r+1}{2}$&
$M(K_{r+1})$ (\cite{MR0094381}).\\
\midrule
$\mathcal{EX}(W_4)$ &
$ \begin{cases}
3r-2, & r \text{ odd},\\
3r -3, & r \text{ even}.
\end{cases}$
 &\parbox{6cm}{Parallel connections of copies of $F_7$ and 
 the binary spike $Z_4$ found in Oxley~\cite{MR2849819} (\cite{MR879563}). }\\
\midrule
$\mathcal{EX}(K_{3,3})$&
$ \begin{cases}
14r/3 -7, & r = 0 \pmod{3},\\
14r/3-11/3, & r= 1 \pmod{3},\\
14r/3-19/3, & r=2 \pmod{3}.
\end{cases}$
&\parbox{6cm}{Parallel connections of copies of $PG(1,2)$, $PG(2,2)$ and $PG(3,2)$ (\cite{MR2742785}).}\\
\midrule
Conjecture for $\mathcal{EX}(K_5)$&
$ \begin{cases}
9r/2-13/2, & r \text{ odd},\\
9r/2 - 6, & r \text{ even}.
\end{cases}$
&\parbox{6cm}{Parallel connections along a line of copies of $C_{12}$ and $PG(2,2)$.}\\
\bottomrule
\end{tabular}
\caption{Some known and conjectured size functions for classes of binary matroids}
\label{tab:binarysizes}
\end{table}

In this paper we consider the class $\mathcal{EX}(AG(3,2))$, being the class of simple binary matroids with no $AG(3,2)$-minor and prove the following main result:

\begin{theorem}\label{thm:max_AG32-free}
Let $M$ be a simple binary matroid of rank $r \geq 5$ with no $\ag$-minor. Then $|M|\leq \binom{r+1}{2}$.
Moreover, if $r \geq 6$ and $|M|= \binom{r+1}{2}$ then $M$ is the graphic matroid of $K_{r+1}$.
\end{theorem}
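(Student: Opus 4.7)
The plan is to split on whether $M$ is regular. If $M$ has neither an $F_7$ nor an $F_7^*$ minor then, by Tutte's excluded-minor characterisation of regular matroids, $M$ is regular and Heller's theorem (cited in the introduction) immediately gives $|M|\le\binom{r+1}{2}$. For $r\ge 6$, a simple graphic matroid of rank $r$ on $\binom{r+1}{2}$ elements must come from $K_{r+1}$, and Whitney's $2$-isomorphism theorem promotes this from a statement about graphs to one about matroids; the regular case therefore yields $M=M(K_{r+1})$ exactly as required.

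The heart of the argument is the non-regular case, where $M$ has an $F_7$- or $F_7^*$-minor but no $\ag$-minor, and I would aim to prove the strict inequality $|M|<\binom{r+1}{2}$ for every $r\ge 5$. The geometric intuition is that $\ag/e=F_7$ and $\ag\setminus e=F_7^*$ for every element $e$ of $\ag$, so a simple binary matroid containing an $F_7$- or $F_7^*$-minor is ``one element away'' from containing $\ag$. Avoiding $\ag$ should therefore force the offending minor to sit inside $M$ along a low-connectivity separator. Concretely, the key structural lemma I would target states that any non-regular $M\in\exa$ of rank at least $5$ admits a decomposition $M=M_1\oplus_k M_2$ with $k\le 3$, in which $M_1$ houses the non-regular ``core'' and $M_2\in\exa$ has strictly smaller rank.

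Given such a lemma, the non-regular bound follows by induction on $r$: the count $|M|=|M_1|+|M_2|-k$ combines an inductive estimate on $|M_2|$ with a direct bound on $|M_1|$ in terms of the rank of its non-regular core, and the base case $r=5$ is settled by a direct or computer-assisted enumeration of the rank-$5$ members of $\exa$. The uniqueness statement for $r\ge 6$ in Theorem~\ref{thm:max_AG32-free} then comes for free: the regular analysis singles out $M(K_{r+1})$, and the non-regular strict inequality rules out every other candidate.

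The main obstacle will be the structural decomposition lemma, because an $F_7$- or $F_7^*$-minor can in principle be entangled with the rest of $M$ in complicated ways. The hope is to leverage the exclusion of $\ag$ by analysing binary single-element extensions and coextensions of $F_7$ and $F_7^*$: each such extension either already creates an $\ag$-minor, or forces a parallel/serial attachment recognisable as a short separator. Pushing this local analysis through the whole of $M$ should deliver the desired bounded-rank separator; once in hand, the rest of the proof is a routine induction and element count.
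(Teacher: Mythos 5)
Your split into regular and non-regular cases is reasonable in outline (the regular half is indeed immediate from Heller plus the fact that a simple graph of rank $r$ with $\binom{r+1}{2}$ edges is $K_{r+1}$), but the non-regular half as you have framed it contains a factual error and a large unproved gap. First, the claim you aim for --- that every non-regular $M\in\exa$ of rank $r\geq 5$ satisfies the strict inequality $|M|<\binom{r+1}{2}$ --- is false at $r=5$: the computations in Table~\ref{tab:lowrank} show there are three non-regular rank-$5$ matroids in $\exa$ of size $15=\binom{6}{2}$ (they are described in the appendix). This is precisely why the uniqueness statement in the theorem only begins at $r\geq 6$. Your plan would therefore have to treat rank $5$ purely computationally and start the strict inequality at $r\geq 6$, which changes the shape of your induction (the base case of the decomposition argument can no longer be ``settled by enumeration of rank-$5$ members'' in the way you describe, since those members violate the inequality you are inducting on).

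Second, and more seriously, the ``key structural lemma'' --- that any non-regular member of $\exa$ admits a $k$-separation with $k\leq 3$ isolating the non-regular core in a bounded-rank side --- is not established, and it is essentially the entire content of the theorem in your approach. The heuristic that excluding $\ag$ forces an $F_7$- or $F_7^*$-minor to attach across a small separator is plausible for the \emph{extremal} matroids (the maximum-sized non-regular examples of Theorem~\ref{thm:max_AG32-free_nonreg} are indeed generalized parallel connections of $M(K_r)$ with $F_7$ along a line), but proving it for \emph{all} non-regular members of the class amounts to a structure theorem for $\exa$, and the proposed local analysis of single-element extensions and coextensions of $F_7$ and $F_7^*$ does not obviously propagate when the $F_7$-minor arises only after contracting many elements deep inside a $3$-connected matroid. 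The paper avoids any such decomposition: it takes a minimum-rank counterexample, shows it is $3$-connected by the $2$-sum count, then argues that every element lies on at least $r$ lines (since $si(M/x)$ loses at least $r+1$ elements), deduces via Lemma~\ref{lem:lines} that every element lies in an $F_7$-restriction, and finally invokes Lemma~\ref{lem:two_F7} (two distinct $F_7$-restrictions in a $3$-connected binary matroid force an $\ag$-minor) to get a contradiction. If you want to pursue your route, you should either prove the decomposition lemma in full or replace it with a counting mechanism of this kind; as written, the proposal does not constitute a proof.
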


Note that this theorem contradicts Theorem~6.8 of Kung \cite{MR1224696} where it is claimed that $\exa$  has a size function strictly larger than that of graphic matroids. 
This was an error caused by incorrectly extrapolating to higher ranks the construction of the matroid $M(K_5)^+$ which is the unique rank-4 extension of the graphic matroid $M(K_5)$, and the unique maximum-sized rank-4 matroid in \exa.

We were initially motivated to study the class $\exa$ due to its connections to Seymour's $1$-flowing conjecture, which asserts that a binary matroid is $1$-flowing  if and only if it has no $AG(3,2)$-, $T_{11}$- or $T_{11}^*$-minor (see Seymour \cite{MR633121} for details).  Regular matroids satisfy the $1$-flowing conjecture and so the non-regular members of this class are important for the resolution of this conjecture. Our second main result is the determination of the maximum sized {\em non-regular} matroids in $\mathcal{EX}(AG(3,2))$.

\begin{theorem}\label{thm:max_AG32-free_nonreg}
Let $M$ be a simple non-regular binary matroid of rank $r \geq 6$ with no $\ag$-minor. Then $|M|\leq \binom{r}{2}+4$.
Moreover, if $|M|= \binom{r}{2}+4$ and $r\geq 7$, then $M$ is the generalized parallel connection along a line of $M(K_r)$ and $F_7$.
\end{theorem}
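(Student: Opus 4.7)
The plan is to reduce the non-regular case to \tref{max_AG32-free} by showing that every simple non-regular $M \in \exa$ of rank $r$ decomposes as a generalized parallel connection along a triangle of a smaller member of $\exa$ with a copy of $F_7$. The bound $\binom{r}{2}+4$ then falls out from the additivity of size under the connection.

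First, I would establish that $M$ has an $F_7$-restriction, not merely an $F_7$-minor. Since $M$ is non-regular binary, Tutte's theorem yields an $F_7$- or $F_7^*$-minor. Take an $F_7$-minor obtained with the fewest possible contractions; if any element $e$ must be contracted, lifting the $F_7$-minor produces an $8$-element rank-$4$ restriction of $M$ containing $F_7$ as a hyperplane, and a finite analysis of binary rank-$4$ matroids on $8$ elements shows that the lifted restriction is either $F_7 \oplus U_{1,1}$ (a coloop extension, so $e$ could have been deleted instead) or contains $\ag$, contradicting our hypothesis. The case where only an $F_7^*$-minor is visible reduces to the above by a parallel argument, exploiting the self-duality of $\ag$ to dualize portions of the construction.

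Next, let $F$ be the ground set of an $F_7$-restriction---a rank-$3$ flat of size $7$---and consider $L := \mathrm{cl}(E(M) \setminus F) \cap F$, a flat of $M|F \cong F_7$. Thus $L$ is empty, a single point, a triangle, or all of $F$. If $L$ is empty or a single point, $M$ splits as a direct sum or a parallel connection with $F_7$, and a short calculation using \tref{max_AG32-free} on the other summand gives a bound strictly less than $\binom{r}{2}+4$ for $r \geq 6$. If $L = F$, so that $E(M)\setminus F$ spans $F$, pick a minimal independent $B_0 \subseteq E(M)\setminus F$ with $\mathrm{cl}(B_0) \supseteq F$; elementary closure arguments force $|B_0| \geq 4$. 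The rank-$4$ restriction $M|(F \cup B_0)$ embeds into $PG(3,2)$ with $F$ as a standard $F_7$-hyperplane, and a finite case analysis of how $B_0$ intersects the affine side---possibly using further contractions of elements outside $F \cup B_0$---must produce an $\ag$-minor of $M$, contradicting our hypothesis. This is the main technical obstacle of the proof.

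With $L$ forced to be a triangle $T \subseteq F$, the attaching structure is exactly that of a generalized parallel connection along the line $T$, so $M = \mathrm{gpc}_T(M', F_7)$ where $M' := M \setminus (F \setminus T)$ is a minor of $M$ (hence in $\exa$) of rank $r-1 \geq 5$. \tref{max_AG32-free} yields $|M'| \leq \binom{r}{2}$, so $|M| = |M'| + 7 - 3 \leq \binom{r}{2} + 4$. For equality with $r \geq 7$, we have $|M'| = \binom{r}{2}$ at rank $r-1 \geq 6$, so $M' = M(K_r)$ by the uniqueness part of \tref{max_AG32-free}; triangle-transitivity of both $\mathrm{Aut}(K_r)$ (via $S_r$) and $\mathrm{Aut}(F_7)$ then implies that all resulting generalized parallel connections are isomorphic, yielding the claimed characterization.
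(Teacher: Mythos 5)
There are two fatal gaps, both in steps you flag as the technical core. First, your reduction to an $F_7$-\emph{restriction} does not work, because the finite case analysis you invoke is false. A simple rank-$4$ binary matroid $N$ on $8$ elements with a non-coloop element $e$ satisfying $N/e\cong F_7$ consists of $e$ together with one point from each of the seven lines of $PG(3,2)$ through $e$; of the $2^7=128$ such choices, only $8$ make $e$ a coloop (the seven points form a hyperplane, giving $F_7\oplus U_{1,1}$) and only $8$ give $\ag$ (the eight points form the complement of a hyperplane). The remaining $112$ choices yield matroids that contain no hyperplane (hence no $F_7$-restriction) and are not affine (hence, having only $8$ elements and rank $4$, have no $\ag$-minor). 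So the minimal-contraction argument stalls, and indeed the conclusion it aims for is false: $F_7^*$, and matroids such as $F_7^*\oplus M(K_{r-3})$, are non-regular, $\ag$-free, and have no $F_7$-restriction. (Your treatment of the ``only an $F_7^*$-minor'' case by ``dualizing portions of the construction'' is also unsupported: rank and the size function are not self-dual.) Second, the case $L=F$ does \emph{not} force an $\ag$-minor. Take $F=F_7$ as a hyperplane of $PG(3,2)$ and add four affine points forming a basis; the resulting matroid is $M(K_5)^+$, it has no $\ag$-minor (an $\ag$-minor of an $8$-element-per-hyperplane-complement, rank-$4$ matroid would have to be an affine restriction, and the complement of the four chosen points is again a basis, so no affine plane is contained), yet $\mathrm{cl}(E\setminus F)\supseteq F$. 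Coextensions and direct sums with graphic matroids propagate this configuration to every rank $r\geq 6$, so your trichotomy leaves a case in which no bound on $|M|$ is established.

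More broadly, the decomposition you are aiming for --- $M$ is $F_7$ glued to a smaller matroid along a modular line --- is the \emph{conclusion} of the theorem, and essentially all of the work in the paper goes into producing that $F_7$ and that line. The paper's proof is an induction on rank: it first forces $3$-connectivity, then shows that any element $x$ with $M/x$ non-regular lies on at least $r-2$ lines, uses \lref{lines} and \lref{two_F7} to find such an $x$ avoiding every $F_7$-restriction, applies the inductive hypothesis to identify $si(M/x)$, and then reconstructs $M$ from $si(M/x)$ via even cycle and graft representations (\clref{structure}), with a delicate analysis of ``special elements'' in the equality case. Your final step (additivity of size under the generalized parallel connection, and transitivity of $\mathrm{Aut}(K_r)$ and $\mathrm{Aut}(F_7)$ on triangles) is fine once the decomposition is in hand, but the route you propose to the decomposition does not survive the counterexamples above.
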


It is quite common in such results that there are various maximum-sized ``sporadic'' matroids at lower ranks, and these can obscure the emergence of the asymptotic behaviour, and cause difficulties in framing an inductive proof. We deal with this ``low-level junk'' through computer analysis of the lower rank cases, a development that Kung recognised would be necessary when he observed that {\em ``because many small examples need to be examined, some computer assistance will be needed''.} We place particular emphasis on the 
correctness of these computations as they form the foundation of the proof, and ensure that they are performed at least twice independently.

The proof of \tref{max_AG32-free} is in \sref{maxproof}, while \tref{max_AG32-free_nonreg} in proved in \sref{non-reg}. \sref{lowrank} describes the computational results for the low rank cases that provide the base cases for the subsequent inductive proofs, while \sref{graft} presents fundamental results on even cycle matroids and graft matroids which are needed for the proofs of the main results.

\section{Low rank}\label{sec:lowrank}

To get a clean base for an inductive proof, a case-analysis of the low-rank situation is necessary, and this is best done with the aid of a computer. To increase confidence in this part of the proof, we ensure that all computations are done twice using independent software packages and approaches.

For binary matroids of any moderate fixed rank $r$,  it is possible to determine the cycle index of the group $PGL(r,2)$, and use P\'olya's Enumeration Theorem to calculate the number of orbits of this group on subsets of $PG(r-1,2)$ of each size, and thereby determine the exact numbers of binary matroids of rank up to $r$.  Although the total numbers of binary matroids grow extremely rapidly with rank, it is straightforward to produce a complete list of all 475499108 (including the empty matroid) binary matroids of rank up to 6. It is easy to verify that the matroids produced are pairwise non-isomorphic, and as the numbers of matroids of every rank and size agree with the theoretical numbers, we are very confident that this list is both correct and complete. Working with the much smaller subset consisting of the binary matroids of size up to 22, it is relatively straightforward to calculate the minors of each matroid that arise from a single deletion or contraction (and simplification). Then information about which matroids in this list have $AG(3,2)$ as a minor can be propagated throughout the catalogue, finally yielding a catalogue of all the matroids of rank 6 and size up to 22. 

A complementary approach is to construct a catalogue of {\em only} the matroids in ${\mathcal EX}(AG(3,2))$ by a 
`bootstrap' approach that grows the catalogue according to size. At each stage of the process, a binary matroid of size $k$ is extended to one of size $k+1$ by the addition of a point in the ambient vector space. Rather than testing directly whether the new matroid has an $AG(3,2)$-minor, each of its single element deletions and contractions are tested for membership in the catalogue constructed so far. By starting with the list of binary matroids of some fixed rank, and size at most 8, but excluding $AG(3,2)$ itself, this approach generates only those matroids with no $AG(3,2)$-minor. This approach was implemented twice, once in the computer algebra system GAP and once in C. The former uses only very well-tested standard functions for isomorph rejection and is very easy to verify, but manages only up to rank 7, while the latter is much faster and could manage rank 8, but for which external validation is harder.  

In any event, all approaches produced entirely consistent results for the low-rank case analysis, and these results are 
shown in Table~\ref{tab:lowrank}.

\begin{table}[t]
\begin{tabular}{ccccc}
\toprule
Rank&Max size&Number&Max size non-regular&Number\\
\midrule
3&7&1&7&1\\
4&11&1&11&1\\
5&15&4&15&3\\
6&21&1&19&13\\
7&28&1&25&1\\
8&36&1&32&1\\
\bottomrule
\end{tabular}
\caption{Numbers of maximum-sized matroids in $\exa$ for rank up to $8$}
\label{tab:lowrank}
\end{table}

This table shows that already at rank 6, the unique maximum sized matroid in $\exa$ is $M(K_6)$, and this provides the base case for the inductive proof
given in \Cref{sec:maxproof}.  For lower ranks, the extremal matroids are the Fano plane for rank three, $M(K_5)^+$ for rank four and at rank 5 there are three coextensions of $M(K_5)^+$ along with $M(K_6)$. The three non-regular examples are shown in Appendix~\ref{lowrank}.

For ranks 3, 4 and 5, the maximum sized matroids in $\exa$ already include non-regular ones as described previously. By rank 7, the parallel connection along a line of $M(K_6)$ and $F_7$ is the unique maximum sized non-regular matroid and this forms the base case for the inductive proof given in \sref{non-reg}. At rank 6, there are 12 additional examples, all of which are coextensions of the three non-regular maximum sized matroids of rank 5 and size 15.

\section{Even cycle matroids and graft matroids}\label{sec:graft}
In this section we introduce even cycle matroids and graft matroids. Even cycle matroids are used for the proofs of both the main theorems, while graft matroids are only used in the proof of \tref{max_AG32-free_nonreg}. The results about even cycle matroids are mostly folklore, but a complete presentation of these results can be found in Pivotto \cite{pivottothesis}.

A {\em signed graph} is a pair $(G,\Sigma)$, where $G$ is a graph and $\Sigma$ is a set of edges of $G$.
Let $A$ be the vertex-edge incidence matrix of $G$ and $S$ the row incidence vector of edges in $\Sigma$; let $A'$ be obtained from $A$ by adding row $S$.
Then the binary matroid represented by $A'$ is the {\em even cycle matroid} represented by $(G,\Sigma)$, denoted $\ecycle(G,\Sigma)$. 
Since the rows of $A$ span the cuts of $G$, replacing $\Sigma$ with $\Sigma \triangle C$, for a cut $C$ of $G$, does not change the matroid. We call any such set $\Sigma \symdiff C$ a {\em signature} of $(G,\Sigma)$ and the operation of replacing $\Sigma$ by $\Sigma\symdiff C$ is called a {\em resigning}.

Suppose that $M$ is a binary matroid and $M/x$ is graphic for some element $x$. Then any matrix representation of $M$ may be row-reduced to the following form

\begin{displaymath}
A= \kbordermatrix{ 
 & x &  &\\
 & 0 & \vrule & \\
 & \vdots & \vrule & \qquad & A' & \qquad &\\
 & 0 & \vrule & \\\cline{2-7}
 &1 &\vrule & \qquad & S & \qquad &},
\end{displaymath}

\bigskip
where $M(A')$ is a graphic matroid. Let $G$ be a graph representing $M(A')$ with the addition of a loop labelled $x$; let $\Sigma$ be the set of edges in $G$ which correspond to columns of $A$ which have a $1$ in the last row. Then $A$ represents $\ecycle(G,\Sigma)$, i.e. $M$ is the even cycle matroid of $(G,\Sigma)$. 

A set of edges $F$ in a signed graph $(G,\Sigma)$ is {\em $\Sigma$-even} if $|F \cap \Sigma|$ is even (and is {\em $\Sigma$-odd} otherwise). 
If every cycle in $G$ is $\Sigma$-even, then the empty set is a signature of $(G,\Sigma)$ and $\ecycle(G,\Sigma)$ is a graphic matroid. 
Let $X$ be the set of $\Sigma$-odd loops of $G$ and suppose that every $\Sigma$-odd cycle of $G$ not in $X$ uses a specific vertex $w$; then every non-loop cycle of $G$ that doesn't use $w$ is $\Sigma$-even, so for some signature $\Sigma'$, we have that $\Sigma'-X \subseteq \delta_G(w)$.
Construct a graph $H$ from $G$ by splitting the vertex $w$ into two vertices $w'$ and $w''$ so that the edges in $\delta(w)\cap \Sigma'$ are incident with $w'$, the edges in $\delta(w)- \Sigma'$ are incident with $w''$ and the edges in $X$ become edges between $w'$ and $w''$.
With this construction $\ecycle(G,\Sigma)$ is the graphic matroid of $H$. We will use this construction both in the proof of \tref{max_AG32-free} and \tref{max_AG32-free_nonreg}.

Minor operations on even cycle matroids correspond to minor operations on signed graphs in the following way. Let $(G,\Sigma)$ be a signed graph and $e$ an edge of $G$. Then we define $(G,\Sigma)\backslash e$ as $(G\backslash e,\Sigma-\{e\})$. Contraction is defined as follows: if $e$ is a loop of $G$ which is in $\Sigma$, then $(G,\Sigma)/e=(G\backslash e,\emptyset)$; otherwise we pick any signature $\Sigma'$ which does not contain $e$ and define $(G,\Sigma)/e=(G/e,\Sigma')$. The choice of $\Sigma'$ is not unique, but this is irrelevant for the matroid. With these definitions we have that $\ecycle(G,\Sigma)\backslash e=\ecycle((G,\Sigma)\backslash e)$ and $\ecycle(G,\Sigma)/ e= \ecycle((G,\Sigma)/e)$.

A {\em digon} is a pair of parallel edges of $(G,\Sigma)$ with different parities.

We will make use of the even cycle representation of $F_7$, which is given in Figure~\ref{fig:F7ecycle}.

\begin{figure}[htbp]
\begin{center}
\includegraphics[width=3cm]{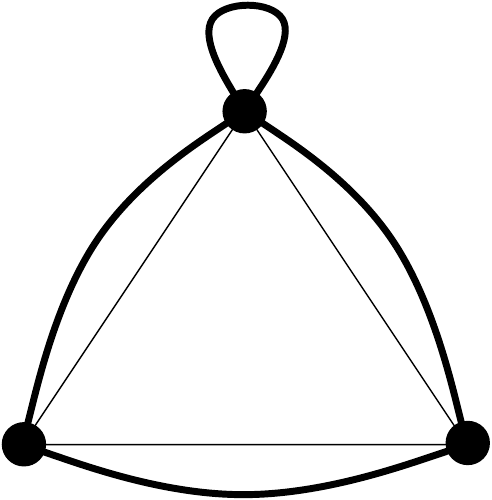}
\caption{Even cycle representation of $F_7$. Bold edges are odd, thin edges are even.}
\label{fig:F7ecycle}
\end{center}
\end{figure}

Next we introduce graft matroids, which will be used in the proof of \tref{max_AG32-free_nonreg}.
A {\em graft} is a pair $(G,T)$ where $G$ is a graph and $T$ a set of vertices of $G$ of even size.
The {\em graft matroid} arising from a graft $(G,T)$ is the binary matroid with matrix representation obtained from the vertex-edge incidence matrix of $G$ by adding a column that is the incidence vector of $T$.
The element corresponding to the incidence vector of $T$ is called the {\em graft element} of this matroid.
A {\em $T$-join} of a graft $(G,T)$ (or a $T$-join of $G$) is a set of edges $J$ of $G$ such that the vertices of odd degree in $G[J]$ are exactly the vertices in $T$. 
If $M$ is the graft matroid of $(G,T)$ and $t$ is the graft element of $M$, then the circuits of $M$ using $t$ are of the form $\{t\} \cup J$, for a minimal $T$-join $J$ of $G$ (i.e. a $T$-join of $G$ containing no cycle).

Since $F_7$ is the graft matroid of $(K_4,V(K_4))$, the maximum-sized matroids appearing in \tref{max_AG32-free_nonreg} are graft matroids for the following grafts $(G,T)$:
$G$ is the graph obtained from $K_n$ by adding a vertex $v$ adjacent to three vertices of $K_n$ and $T$ is the set comprised of $v$ and its neighbours. 

If $M$ is a graft matroid with graft element $t$ then $M\backslash t$ is obviously a graphic matroid, while $M/t$ may not even be a graft matroid. However, $M\backslash e$ and $M/e$ are graft matroids for every element $e\in E(M)-\{t\}$;
$M\backslash e$ is the graft matroid of $(G\backslash e,T)$ and $M /e$ is the graft matroid of $(G/e,T')$, where $T'$ is defined as follows: let $e=uv$ in $G$ and $w$ be the corresponding vertex in $G/e$; then $T'=T-\{u,v\}$ if both or neither of $u$ and $v$ are in $T$, and $T'=T-\{u,v\}\cup\{w\}$ otherwise. 

Consider the graft $(G,T)$, where $G$ is isomorphic to $K_{2,4}$ and $T$ is the set of vertices of degree two in $G$ (see Figure~\ref{fig:K24graft}). Then a $T$-join is given by $\delta(w)$, where $w$ has degree $4$. Let $M$ be the graft matroid of $(G,T)$; then $M$ has the following binary representation (where the first four basis elements are a star and $t$ is the graft element).

\begin{displaymath}
\kbordermatrix{
&  &  &  &  &  &  &  &  & t \cr
&1 & 0 & 0 & 0 & 0 & 1 & 1 & 1 & 1\cr
&0 & 1 & 0 & 0 & 0 & 1 & 0 & 0 & 1\cr
&0 & 0 & 1 & 0 & 0 & 0 & 1 & 0 & 1\cr
&0 & 0 & 0 & 1 & 0 & 0 & 0 & 1 & 1\cr
&0 & 0 & 0 & 0 & 1 & 1 & 1 & 1 & 0\cr}
\end{displaymath}

Pivoting on the first entry of the column for $t$ (i.e. adding the first row to the second, third and fourth) we obtain the following matrix.

\begin{displaymath}
\kbordermatrix{
&  &  &  &  &  &  &  &  & t \cr
&1 & 0 & 0 & 0 & 0 & 1 & 1 & 1 & 1\cr
&1 & 1 & 0 & 0 & 0 & 0 & 1 & 1 & 0\cr
&1 & 0 & 1 & 0 & 0 & 1 & 0 & 1 & 0\cr
&1 & 0 & 0 & 1 & 0 & 1 & 1 & 0 & 0\cr
&0 & 0 & 0 & 0 & 1 & 1 & 1 & 1 & 0\cr}
\end{displaymath}

From this representation it is clear that $M/t$ is isomorphic to $\ag$.

\begin{figure}[htbp]
\begin{center}
\includegraphics[width=4cm]{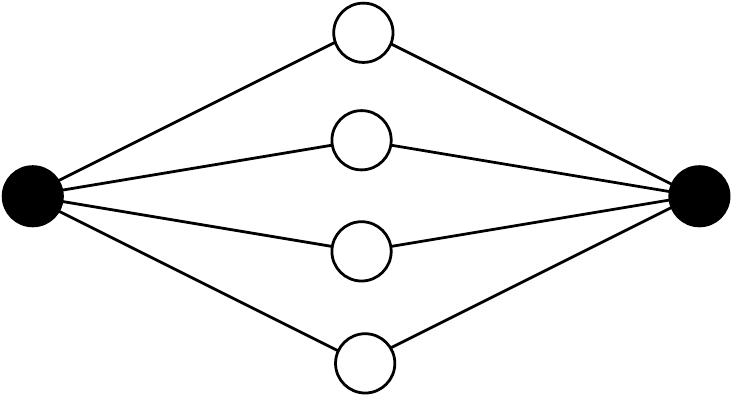}
\caption{The graft element is represented by white vertices. Contracting the graft element produces $\ag$.}
\label{fig:K24graft}
\end{center}
\end{figure}

\section{Proof of \tref{max_AG32-free}}\label{sec:maxproof}

By a {\em line} in a binary matroid $M$ we mean three elements forming a circuit of $M$.
Before proving \tref{max_AG32-free_nonreg} we require the following results.

\begin{lemma}\label{lem:lifting_circuit}
Let $x$ be an element in the simple binary matroid $M$.
Assume that $C$ is a circuit in a minor of $M/x$, where
$|C|=4$, and every element in $C$ is in a parallel pair in $M/x$.
Then $M$ has an $\ag$-minor.
\end{lemma}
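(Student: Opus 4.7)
The plan is to translate the hypothesis on $M/x$ into structural information about $M$, and then to exhibit an eight-element restriction of a minor of $M$ that is isomorphic to $\ag$. First, because $M$ is simple, each parallel pair $\{a_i,a_i'\}$ of $M/x$ must lift to a triangle $T_i=\{x,a_i,a_i'\}$ of $M$, so $M$ has four triangles meeting in the common element $x$. A quick check shows the nine elements $\{x,a_1,\ldots,a_4,a_1',\ldots,a_4'\}$ are pairwise distinct: any equality among the $a_j$'s or $a_j'$'s would force a circuit strictly inside $C$ (via a parallel pair in $M/x$), contradicting $C$ being a $4$-circuit in some minor.

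Next, I would write the minor of $M/x$ containing $C$ as $N=(M/x)/S\setminus T$. Since $a_i$ must be a non-loop of $N$, its partner $a_i'$ cannot lie in $S$ (contracting a loop-free element of a parallel pair makes the other element a loop), and without loss of generality we may move each $a_i'$ into $T$. A short descent argument on $S$ --- whenever $\{x\}\cup S_0$ is a circuit of $M$ for some $S_0\subseteq S$, absorb one $s_0\in S_0$ into $T$ instead --- further lets me assume that $x$ is not a loop of $M/S$. Setting $M'=M/S\setminus(T\setminus\{a_1',\ldots,a_4'\})$, one then verifies that $M'$ contains all nine elements, that the four triangles $T_i$ survive as $3$-circuits of $M'$, and (from $C$ being a $4$-circuit of $M'/x$ and $x$ being a non-loop) that $\{x\}\cup C$ has rank~$4$ in $M'$.

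Now $\rk_{M'}(C)\in\{3,4\}$. In Case~A, $C$ itself is a $4$-circuit of $M'$; in Case~B, $C$ is independent in $M'$ and $\{x\}\cup C$ is a $5$-circuit (no shorter circuit containing $x$ can arise, since $\{x,a_i,a_j\}$ being a circuit would force $a_i'=a_j$ as vectors, creating a parallel pair $\{a_i,a_j\}$ in $M'/x$ inside the $4$-circuit $C$, a contradiction; a similar argument rules out $4$-circuits $\{x,a_i,a_j,a_k\}$). In either case, let $R:=M'|\{a_1,\ldots,a_4,a_1',\ldots,a_4'\}$. The identities $a_i'=x+a_i$ show that $x$ lies in the span of these eight elements, so $R$ has rank~$4$. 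A direct vector computation --- in Case~A, take the basis $\{a_1,a_2,a_3,a_1'\}$ with $a_4=a_1+a_2+a_3$; in Case~B, take the basis $\{a_1,a_2,a_3,a_4\}$ with $x=a_1+a_2+a_3+a_4$ --- shows that in both cases the eight vectors representing $R$ form, up to a change of basis, precisely the eight odd-weight vectors of $\mathbb{F}_2^4$. Hence $R\cong\ag$, and so $M$ has an $\ag$-minor.

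The main obstacle will be the technical setup in the second paragraph: choosing the minor $M'$ so that $x$ remains a non-loop, all four triangles persist as $3$-circuits, and the rank of $\{x\}\cup C$ is exactly~$4$. Once these structural conditions are secured, the identification of $\ag$ inside the eight-element restriction $R$ is a short binary calculation using the odd-weight vectors in $\mathbb{F}_2^4$.
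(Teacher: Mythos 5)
Your proof is correct, and it is essentially the paper's argument: lift the $4$-circuit through $x$, use the four triangles $\{x,a_i,a_i'\}$, and recognise the eight elements $\{a_1,\ldots,a_4,a_1',\ldots,a_4'\}$ as the odd-weight vectors of $\mathbb{F}_2^4$. Your Cases A and B correspond exactly to the two values of the unknown entry $\alpha$ (the $x$-coordinate of $a_4$) in the paper's displayed matrix, which handles both at once; your explicit treatment of the minor $N=(M/x)/S\setminus T$ is more careful than the paper's one-line ``extend this to a representation of $M/x$'', and your descent on $S$ does work (each eligible $s_0$ is a loop of $(M/x)/(S-s_0)$, so removing it from the contraction set preserves the circuit $C$; alternatively, just take $S$ independent in $M/x$ from the outset).

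One point worth recording: your decision to discard $x$ and keep all eight of $a_1,\ldots,a_4,a_1',\ldots,a_4'$ is the right one, and it is where you quietly improve on the printed proof. The paper instead instructs the reader to delete whichever of $c_4$ or $d_4$ has first entry $1$, which retains $x$ together with three of the triangles $\{x,c_i,d_i\}$; since $\ag$ has no triangles, that eight-element set cannot be $\ag$, and the instruction should read ``delete the column labelled $x$'' (after which the remaining eight columns are precisely the odd-weight vectors, as in your computation). So no gap on your side; the discrepancy lies in the paper's final sentence.
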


\begin{proof}
Let $C=\{c_{1},c_{2},c_{3},c_{4}\}$, and let $\{c_{i},d_{i}\}$ be a parallel pair
of $M/x$ for every $i$, so that $\{x,c_{i},d_{i}\}$ is a line of $M$.
Let $N$ be a minor of $M/x$ such that $C$ is a circuit of $N$.
Consider a binary reduced representation of $N$ such that $c_{1}$, $c_{2}$, and $c_{3}$
all label rows, and extend this to a representation of $M/x$.
Now we can construct a representation of $M$ by adding a row labelled $x$.
Restricting this representation to $C\cup\{x,d_{1},d_{2},d_{3},d_{4}\}$ gives the
following submatrix.
\[
\kbordermatrix{&d_{1}&d_{2}&d_{3}&c_{4}&d_{4}\\
x&1&1&1&\alpha&\alpha+1\\
c_{1}&1&0&0&1&1\\
c_{2}&0&1&0&1&1\\
c_{3}&0&0&1&1&1
}
\]
We add the first row to all subsequent rows, and then delete the
column labelled by $c_{4}$ or $d_{4}$ that has $1$ as its first entry.
The resulting matrix represents $\ag$, so we are done.
\end{proof}

\begin{lemma}\label{lem:two_F7}
Let $M$ be a $3$-connected binary matroid that contains two distinct $F_7$-restrictions. Then $M$ contains an $\ag$-minor.
\end{lemma}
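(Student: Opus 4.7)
The plan is to case-split on the intersection $I := F_1 \cap F_2$ and to reduce every case ultimately to $|I| = 3$.

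A preliminary observation is that in a binary matroid, an $F_7$-restriction is exactly the set of seven nonzero vectors of some rank-$3$ subspace; letting $V_i = \langle F_i \rangle$, it follows that $F_1 \cap F_2$ corresponds to the nonzero vectors of $V_1 \cap V_2$, and so $|I| = 2^{\dim(V_1 \cap V_2)} - 1 \in \{0, 1, 3\}$, the value $7$ being excluded since $F_1 \neq F_2$.

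The heart of the argument is the case $|I| = 3$. Here $I$ is a line of $M$, $r(F_1 \cup F_2) = 4$ and $|F_1 \cup F_2| = 11$, so $M|(F_1 \cup F_2)$ embeds as a restriction of $PG(3,2)$. Writing $I = \{a, b, a+b\}$ and choosing $\alpha, \beta$ so that $V_1 = \langle a, b, \alpha \rangle$ and $V_2 = \langle a, b, \beta \rangle$, a direct calculation shows that the $4$-element complement $PG(3,2) \setminus (F_1 \cup F_2)$ is the coset $\alpha + \beta + \langle a, b \rangle$; these $4$ vectors, together with $\{a, b, a+b\}$, make up the seven nonzero vectors of the rank-$3$ subspace $H = \langle \alpha + \beta, a, b \rangle$. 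The complementary $PG(3,2) \setminus H$ consists of the remaining $8$ vectors, all lying in $F_1 \cup F_2$, and forms a copy of $\ag$; hence $M$ contains $\ag$ as a restriction.

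For the cases $|I| \in \{0, 1\}$, the plan is to reduce to the preceding case by contracting one or two \emph{bridging elements}, i.e.\ elements $e \in M \setminus (F_1 \cup F_2)$ lying in the subspace $V_1 + V_2$. Any such $e$ decomposes as $e = v_1 + v_2$ with $v_1 \in F_1$ and $v_2 \in F_2$, yielding a triangle $\{e, v_1, v_2\}$ of $M$, which identifies $v_1$ with $v_2$ in the simplification of $M/e$. In case $|I| = 1$ with shared element $x$, the alternate decomposition $e = (v_1 + x) + (v_2 + x)$ gives a second triangle, so contracting $e$ identifies both $v_1 \sim v_2$ and $v_1 + x \sim v_2 + x$; the two Fano restrictions of the simplification of $M/e$ then meet in the line $\{x, v_1, v_1 + x\}$, landing us in case $|I| = 3$. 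In case $|I| = 0$, a single contraction only raises $|I|$ to $1$, so I would contract two suitably chosen bridging elements simultaneously and verify that their combined identifications force a $3$-element intersection in the resulting minor.

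The principal obstacle is establishing the existence of the required bridging elements from the hypothesis of $3$-connectivity alone. The argument would proceed by contradiction: without a bridging element (respectively, without two independent ones) in case $|I| = 1$ (respectively $|I| = 0$), the partition of $E(M)$ into $F_1 \cup F_2$ and its complement, suitably refined, would be a $\leq 2$-separation of $M$, contradicting $3$-connectedness. Making this watertight --- particularly when $r(M) > r(F_1 \cup F_2)$, so that elements living outside $V_1 + V_2$ may appear to save $3$-connectivity in non-obvious ways --- is the core technical step.
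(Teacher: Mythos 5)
Your case $|I|=3$ is correct and is exactly the paper's core computation: when $r(F_1\cup F_2)=4$ the eleven points of $F_1\cup F_2$ minus the three points of the common line are the complement of the hyperplane $\langle \alpha+\beta,a,b\rangle$ in $PG(3,2)$, hence an $\ag$-restriction. The gap is in your reduction of the cases $|I|\in\{0,1\}$. Your plan requires a \emph{bridging element}, i.e.\ an element of $E(M)$ lying in $(V_1+V_2)\setminus(V_1\cup V_2)$, and you propose to extract one from $3$-connectivity by arguing that otherwise $F_1\cup F_2$ (``suitably refined'') yields a $\le 2$-separation. That implication is false. The complement of $F_1\cup F_2$ may consist entirely of elements outside the subspace $V_1+V_2$ and still have large rank and provide ample connectivity: for instance, with $V_1=\langle a,b,c\rangle$ and $V_2=\langle a,d,e\rangle$ in $GF(2)^6$, take $E(M)=F_1\cup F_2\cup(f+GF(2)^5)$. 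Every element of $F_1\cup F_2$ lies in many triangles with pairs from the affine block $f+GF(2)^5$, the matroid is $3$-connected, and yet no element of $E(M)$ lies in $(V_1+V_2)\setminus(V_1\cup V_2)$, so your argument never gets started. (This $M$ does of course contain $\ag$ --- it even contains one inside the affine block --- but your proof does not find it.) The $|I|=0$ subcase, which needs \emph{two} such elements, inherits the same problem in aggravated form.

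The paper circumvents this by not insisting on elements inside $V_1+V_2$ and by not trying to control $|I|$ directly. It takes a minimal counterexample, notes that if $E(M)=R_1\cup R_2$ with $r(R_1\cup R_2)\ge 5$ then $(R_1,\,R_2\setminus R_1)$ is a $1$- or $2$-separation (here the ground set really is just the two Fanos, so the separation argument is valid), and otherwise picks any $e\notin \mathrm{cl}(R_1)\cup \mathrm{cl}(R_2)$. Bixby's lemma guarantees that one of $\mathrm{si}(M/e)$ or $\mathrm{co}(M\backslash e)$ is $3$-connected, and a short check shows the two Fano restrictions survive (distinct) in that minor, contradicting minimality; the induction eventually forces the rank-$4$ configuration you handled correctly. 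If you want to salvage your approach, you would need to replace the ``bridging element exists'' step with this kind of ground-set–shrinking induction, since $3$-connectivity alone gives you no purchase on which vectors of $V_1+V_2$ actually belong to $E(M)$.
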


\begin{proof}
Assume that the lemma fails for $M$, where $M$ is as small as possible. 
Let $R_1$ and $R_2$ be the two $F_7$-restrictions. 
The hypotheses imply that $r(R_1 \cup R_2) > 3$, since M is binary and has no parallel pairs. 
If $r(R_1 \cup R_2)=4$, then submodularity implies that $cl(R_1) \cap cl(R_2)$ is a line. 
Now it is easy to verify that $(R_1 \cup R_2)\backslash (R_1 \cap R_2)$ is an $\ag$-restriction. 

Therefore $r(R_1 \cup R_2) > 4$. 
Because $M$ has no parallel pairs and $R_1$ and $R_2$ are projective geometries, $cl(R_1)=R_1$ and $cl(R_2)=R_2$.
If $R_1 \cup R_2 = E(M)$, then $(R_1, 
R_2\backslash R_1)$ is a $1$- or a $2$-separation of $M$. 
Therefore there is an element $e$ in neither $cl(R_1)$ nor $cl(R_2)$. 
Hence $R_1$ and $R_2$ are $F_7$-restrictions of $M\backslash e$ and $M/e$. 
Bixby's lemma says that either $si(M/e)$ or $co(M\backslash e)$ is $3$-connected. 
Assume $si(M/e)$ is $3$-connected.
When we suppress parallel pairs we delete elements other
than those in $R_1$ and $R_2$, or, if a parallel pair contains
$r_{1}\in R_{1}$ and $r_{2}\in R_{2}$,
we delete $r_{2}$ and replace $R_{2}$ with $(R_{2}-r_{2})\cup r_{1}$.
Note that $r_{M/e}(R_{1}\cup R_{2})>3$, so not every element
in $R_{1}$ is parallel with a point in $R_{2}$.
Therefore the resulting matroid is isomorphic to
$si(M/e)$, and has distinct $F_{7}$-restrictions,
$R_{1}$ and $R_{2}$.
By the minimality of $M$, it has an $\ag$-minor.
Therefore $co(M\backslash e)$ is $3$-connected.
If $R_1$ and $R_2$ are not $F_7$-restrictions of 
$co(M\backslash e)$, then there must be a series pair of
$M \backslash e$ contained in $R_1 \cup R_2$. 
But an $F_7$-restriction cannot intersect a series pair, by virtue of the fact that a circuit and a cocircuit cannot intersect in a single 
element.
Hence $co(M\backslash e)$ has an $\ag$-minor, and this
contradiction completes the proof.
\end{proof}

\begin{lemma}\label{lem:lines}
Let $M$ be a binary matroid and $\ell_1,\ldots,\ell_k$ be lines of $M$ all using an element $x$, where $k\geq 4$.
Suppose that there is no $F_7$-restriction of $M$ using $x$. If $r(\ell_1\cup \cdots\cup\ell_k)\leq k$, then $M$ contains an $\ag$-minor. 
\end{lemma}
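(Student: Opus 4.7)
The plan is to produce a $4$-circuit in a minor of $M/x$ all of whose elements lie in parallel pairs of $M/x$, so that Lemma~\ref{lem:lifting_circuit} yields the desired $\ag$-minor. Writing $\ell_i=\{x,c_i,d_i\}$, the fact that $M$ is simple and binary forces distinct lines through $x$ to share only $x$, so the $2k$ elements $c_i,d_i$ are pairwise distinct and each pair $\{c_i,d_i\}$ is a parallel pair in $M/x$. Let $N$ be the restriction of $si(M/x)$ to $\{c_1,\ldots,c_k\}$; this is a simple binary matroid on $k$ elements of rank $r(\ell_1\cup\cdots\cup\ell_k)-1\leq k-1$, so $N$ has at least one circuit. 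Pick one of minimum size, call it $C$, and denote its size by $m$.

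The first key step is to rule out $m=3$; this is where the $F_7$-restriction hypothesis enters. Suppose $C=\{c_{i_1},c_{i_2},c_{i_3}\}$ is a triangle of $N$. The corresponding triangle of $M/x$ lifts in $M$ either to the circuit $\{c_{i_1},c_{i_2},c_{i_3}\}$ or to the circuit $\{x,c_{i_1},c_{i_2},c_{i_3}\}$. In either case $c_{i_3}$ lies in the span of $\{x,c_{i_1},c_{i_2}\}$, and since $d_{i_j}=x+c_{i_j}$ in the binary vector space for each $j$, all seven elements $\{x\}\cup\{c_{i_j},d_{i_j}:j=1,2,3\}$ lie in this same rank-$3$ flat. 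A simple binary rank-$3$ matroid on seven elements is $F_7$, so this produces an $F_7$-restriction of $M$ through $x$, contradicting the hypothesis. Hence $m\geq 4$.

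If $m=4$ the conclusion is immediate: $N$ is a minor of $M/x$ containing the $4$-circuit $C$, and each element of $C$ is in a parallel pair of $M/x$, so Lemma~\ref{lem:lifting_circuit} gives an $\ag$-minor. If $m\geq 5$, I would iteratively contract elements of $C$ inside $M/x$. A standard matroid bookkeeping argument shows that if $D$ is a minimum-girth circuit and $e\in D$, then $D\setminus e$ is a circuit in the contraction by $e$, and the girth drops by at most one. Applying this $m-4$ times within $C$ produces a $4$-circuit $C\setminus\{c_{i_1},\ldots,c_{i_{m-4}}\}$ in a minor of $M/x$; its remaining elements are still in parallel pairs of $M/x$ (the parallel pairs $\{c_{i_j},d_{i_j}\}$ are read inside the original $M/x$, not inside the iterated contraction), so Lemma~\ref{lem:lifting_circuit} again closes the argument.

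The one genuinely non-routine step is the $m=3$ case, which requires handling both possible lifts of a binary triangle and recognising the resulting seven-element rank-$3$ set as $F_7$. Everything else — the restriction-simplification setup, the iterated contraction inside a minimum-length circuit, and the final appeal to Lemma~\ref{lem:lifting_circuit} — is routine binary-matroid manipulation.
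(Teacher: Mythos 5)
Your proof is correct and follows essentially the same route as the paper's: locate a circuit among one chosen point per line in $M/x$, use the no-$F_7$ hypothesis to rule out a triangle, and contract down to a $4$-circuit so that Lemma~\ref{lem:lifting_circuit} applies. The only cosmetic difference is that you phrase the contraction step via minimum girth, whereas the standard fact that $D-e$ is a circuit of the contraction by $e$ for any circuit $D\ni e$ already suffices.
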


\begin{proof}
Let $N$ be the restriction of $M$ to $\ell_1\cup\cdots\cup\ell_k$.
For $i=1,\ldots,k$, let $a_i$ and $b_i$ be the points in $\ell_i$ other than $x$.
Since $N/x$ has rank less than $k$, we may relabel the points in $\ell_1,\ldots, \ell_k$ so that there is a circuit $C$ contained in $\{a_1,\ldots, a_k\}$, say $C=\{a_1,a_2,\ldots,a_j\}$. 
If $j=3$, then $\ell_1\cup\ell_2\cup\ell_3$ is an $F_7$-restriction of $M$ using $x$. 
Therefore $j \geq 4$; then $\{a_{1},a_{2},a_{3},a_{4}\}$ is a circuit in $N/\{a_5,\ldots,a_j\}$, so
\lref{lifting_circuit} implies that $M$ has an $\ag$-minor.
\end{proof}

\begin{proof}[Proof of \tref{max_AG32-free}]
Assume by contradiction that the first part of the theorem fails. Let $M$ be a counterexample of minimum rank;
in other words, $M$ has no $\ag$-minor and $|M|>\binom{r+1}{2}$.
Computations show that the theorem holds for matroids of rank $5$, therefore $r\geq 6$ and for every minor $N$ of $M$ of rank $r-1$ we have $|N|\leq \binom{r}{2}$.

We first show that $M$ must be $3$-connected. 
If not, $M$ is the $1$- or $2$-sum of two matroids $N_1$ and $N_2$ of rank smaller than $r$. 
We consider only the case when $M$ is the $2$-sum of $N_1$ and $N_2$, as the other case is similar.
Let $r_1$ and $r_2$ denote the rank of $N_1$ and $N_2$ respectively. Then $r=r_1+r_2-1$.
$M$ has no parallel pairs, so we may assume $N_1$ has at most one parallel pair and $N_2$ is simple. 
By the minimality of $M$, $|N_1| \leq \binom{r_1+1}{2}+1$ and $|N_2| \leq \binom{r_2+1}{2}$, unless one or both of $r_1,r_2$ are equal to $3$ or $4$ (see Table~\ref{tab:lowrank}). We let the reader verify this case independently and just consider the case that $r_1,r_2 \notin \{3,4\}$.
Therefore 
\begin{align*}
|M|&=|N_1|+|N_2|-2 \leq \binom{r_1+1}{2} + \binom{r_2+1}{2} -1\\
&= \tfrac{1}{2}(r_1^2+r_1+r_2^2+r_2)-1\\
&=\binom{r+1}{2}+r_1+r_2-r_1r_2-1< \binom{r+1}{2}.
\end{align*}
This contradicts the assumption on the size of $M$. It follows that $M$ is $3$-connected.

Now consider any element $x$ of $M$ and let $N=si(M/x)$. 
Then the minimality of $M$ implies that $|N|\leq \binom{r}{2}$ and
\[|M|-|N| \geq \binom{r+1}{2}+1 - \binom{r}{2} = \tfrac{1}{2} (r^2+r-r^2+r) +1 = r+1.\]

Therefore in $M/x$ there are at least $r$ parallel pairs, so $x$ is in at least $r$ lines.  
Let $\ell_1,\ldots,\ell_r$ be such lines. The rank of $\ell_1\cup\cdots\cup\ell_r$ is at most the rank of $M$, which is $r$.
\lref{lines} implies that there is an $F_7$-restriction of $M$ using $x$.
The point $x$ was chosen arbitrarily, hence we showed that every point of $M$ is contained in an $F_7$-restriction. 
The rank of $M$ is at least $5$, so $M$ contains at least two distinct $F_7$-restrictions. It follows by \lref{two_F7} that $M$ has an $\ag$-minor, a contradiction. This concludes the proof of the first part of the theorem.

Now assume that the second part of the statement does not hold. Let $M$ be a counterexample of minimum rank, 
i.e. $|M|=\binom{r+1}{2}$ and $M$ is not graphic.
Computations show that the theorem holds for matroids of rank $6$, therefore $r\geq 7$ and the statement holds for every minor $N$ of $M$ of rank $r-1$.

By the same argument as the one above, $M$ is $3$-connected.
By \lref{two_F7}, there exists an element $x$ in $M$ which is not in an $F_7$-restriction. 
Let $N=si(M/x)$; then the size of $N$ is equal to $|M|-1$ minus the number of lines though $x$. 
Because $N$ is $\ag$-free and simple, $N$ has size at most $\binom{r}{2}$, so $x$ is in at least $r-1$ lines.
\lref{lines} implies that $x$ is in exactly $r-1$ lines $\ell_1,\ldots,\ell_{r-1}$ and $r(\ell_1\cup\cdots\cup\ell_{r-1})=r$.
In particular, $|N|=\binom{r}{2}$ and by the minimality of $M$, the matroid $N$ is the graphic matroid of $K_r$.

For $i=1,\ldots,r-1$, let $a_i$ be the point in $N$ that corresponds to the parallel pair $\ell_i - \{x\}$ in $M/x$. 
To simplify the exposition, we denote the graph $K_r$ representing $N$ by $G$ and we identify every edge of $G$ with the corresponding element in $N$. Let $A=\{a_1,\ldots,a_{r-1}\}$.
Since the rank of $\ell_1\cup\cdots\cup\ell_{r-1}$ is $r$, the set $A$ has rank $r-1$ in $N$. It follows that $A$ is a spanning tree in $G$.

Since $M$ is the lift of a graphic matroid, $M$ is an even cycle matroid; we describe its representation as a signed graph $(G',\Sigma)$ next.
Construct the graph $G'$ from $G$ as follows: add a loop (corresponding to the element $x$) and then, for every $a_i \in A$, add an edge parallel to $a_i$ in $G$; $\Sigma$ is comprised of $x$ plus all the elements of $M$ that are lifted by $x$ (i.e. the elements whose fundamental circuit for the basis $A \cup \{x\}$ contains $x$).
Then $M=\ecycle(G',\Sigma)$ and a line of $M$ is either a $\Sigma$-even triangle of $G'$ or the union of a $\Sigma$-odd loop and a digon.

We claim that $G[A]$ is a star of $G$; if not, it is possible to find an edge $e \in E(G)-A$ which is not in a triangle with two elements in $A$. Then the element $e$ is in at most $r-2$ $\Sigma$-even triangles in $G'$ and $e$ is not in any digon, so $e$ is in at most $r-2$ lines of $M$; this implies that $|si(M/e)|>\binom{r}{2}$, a contradiction. It follows that $G[A]$ is a star $\delta(w)$ of $G$. Next we show that every triangle in $G \backslash A$ is $\Sigma$-even. Suppose this is not the case, and choose an edge $e$ in such a triangle. Then $e$ is in at most $r-2$ $\Sigma$-even triangles in $(G',\Sigma)$ and $e$ is not in any digon; it follows that $e$ is in at most $r-2$ lines of $M$, again a contradiction. 
Therefore every cycle in $G\backslash A$ is $\Sigma$-even, since every cycle in $G\backslash A$ is the symmetric difference of triangles in $G \backslash A$. Since $(G'-w)\backslash x$ is equal to $G-A$, this implies that every odd cycle of $(G',\Sigma)$ (except for $x$) uses the vertex $w$.
By splitting $w$ as described in \sref{graft}, we construct a graphic representation of $M$, and we see that
$M$ is isomorphic to $M(K_{r+1})$.
\end{proof}

\section{Proof of \tref{max_AG32-free_nonreg}}\label{sec:non-reg}

Let $M$ be a simple non-regular binary matroid of rank $r$ with no $\ag$-minor. 
We need to show that $|M|\leq \binom{r}{2}+4$ (if $r\geq 6$) and, for $r\geq 7$, the only matroid achieving the bound is the generalized parallel connection along a line of $M(K_r)$ and $F_7$.

We prove the statement by induction on the rank. 
The base case (when the rank is $6$ or $7$) was proved computationally, as explained in \sref{lowrank}. 
Therefore we may assume that $r\geq 8$ and the statement holds for matroids of smaller rank than $M$.

The proof is split into two subsections; in the first one we show the bound on the size of $M$, while in the second, more technical one, we characterize the matroids achieving this bound.
To prove the bound on the size we proceed with a series of claims.
First we show that $M$ needs to be $3$-connected. Next we consider an element $x$ such that $M/x$ is non-regular; in \clref{manylines} we show that $x$ needs to be in many lines of $M$. The longest claim of this section, \clref{structure}, shows that if $x$ is in no $F_7$-restriction and $si(M/x)$ is the generalized parallel connection along a line of $M(K_{r-1})$ and $F_7$ then $M$ is what we would expect, namely the generalized parallel connection along a line of $M(K_r)$ and $F_7$.
\clref{rightsize} concludes the proof of the first half of the theorem, and \sref{size_nonreg}.

In \sref{char_nonreg} we characterize the non-regular matroids achieving the bound. 
We use the claims in \sref{size_nonreg} to show that there must exist an element $x$ in $M$ which is in $r-1$ lines that span $E(M)$.
Next we analyze the elements that are not in these lines. We show that $M$ must contain special elements that are not in the span of any two of the lines through $x$. We show that we must have exactly $r-3$ special elements and each one must be in exactly $r-2$ lines. 
Moreover we show that contracting any special element maintains non-regularity. 
Since each special element $y$ is in $r-2$ lines, the induction hypotheses imply that $si(M/y)$ is the generalized parallel connection of $M(K_{r-1})$ with $F_7$.
The fact that there are at least $r-3$ special elements, together with \lref{two_F7}, imply that at least one of them is not in an $F_7$-restriction, and we conclude the proof by \clref{structure}.

\subsection{Bound on the size}
\label{sec:size_nonreg}

In this section we prove the first part of \tref{max_AG32-free_nonreg}, i.e. we show that  $|M|\leq \binom{r}{2}+4$.

\begin{claim}\label{cl:3conn}
$M$ is $3$-connected.
\end{claim}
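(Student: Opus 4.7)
The plan is to argue by contradiction, mirroring the $3$-connectivity argument in the proof of \tref{max_AG32-free}. Suppose $M$ is not $3$-connected, so $M$ is the $1$- or $2$-sum of binary matroids $N_1, N_2$ of ranks $r_1, r_2 < r$. Since the class of regular matroids is closed under $1$-sums and $2$-sums, the non-regularity of $M$ forces at least one summand --- say $N_1$ --- to be non-regular, and so $r_1 \geq 3$ (the smallest non-regular binary matroid being $F_7$). Both $N_i$ inherit from $M$ the absence of an $\ag$-minor, so I will apply the inductive hypothesis to $si(N_1)$ when $r_1 \geq 6$ to get $|si(N_1)| \leq \binom{r_1}{2} + 4$, and apply \tref{max_AG32-free} to $si(N_2)$ when $r_2 \geq 5$ to get $|si(N_2)| \leq \binom{r_2+1}{2}$; the low-rank entries of \Cref{tab:lowrank} will be used to cover the remaining small values of $r_1$ or $r_2$.

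For the $2$-sum case, $r = r_1 + r_2 - 1$ and $|M| = |N_1| + |N_2| - 2$. The simplicity of $M$ permits at most one parallel pair at the basepoint across the two summands, so $|N_1| + |N_2| \leq |si(N_1)| + |si(N_2)| + 1$. Combining the induction bounds gives $|M| \leq \binom{r_1}{2} + \binom{r_2+1}{2} + 3$, and then the arithmetic identity
\[
\binom{r_1+r_2-1}{2} - \binom{r_1}{2} - \binom{r_2+1}{2} = (r_1-2)(r_2-1) - 1
\]
yields $|M| \leq \binom{r}{2} + 4$ whenever $(r_1-2)(r_2-1) \geq 1$, which holds since $r_1 \geq 3$ and $r_2 \geq 2$. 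This contradicts the standing hypothesis $|M| > \binom{r}{2} + 4$ of \sref{size_nonreg}. The $1$-sum case is handled analogously via the identity
\[
\binom{r_1+r_2}{2} - \binom{r_1}{2} - \binom{r_2+1}{2} = r_2(r_1-1),
\]
which is non-negative whenever $r_1 \geq 1$.

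The main obstacle will be the edge cases in which $r_1$ or $r_2$ is too small for the inductive formula to be sharp --- concretely, $|si(N_1)| \leq 11$ for $r_1 = 4$ in place of the formula's $10$, and $|si(N_1)| \leq 15$ for $r_1 = 5$ in place of $14$, with analogous slippage on the $N_2$ side when $r_2 \in \{3,4\}$. For each such small value I plan to substitute the exact size from \Cref{tab:lowrank} and verify $|M| \leq \binom{r}{2}+4$ by a direct computation. Since $r \geq 8$, the complementary rank is always large enough that the arithmetic check goes through comfortably; for example, $r_1 = 4$ in a $2$-sum gives $|M| \leq \binom{r-2}{2} + 10$, and $\binom{r}{2} - \binom{r-2}{2} = 2r-3 \geq 13$ absorbs the discrepancy, with analogous checks for $r_1 \in \{3,5\}$ and for the $1$-sum.
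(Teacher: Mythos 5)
Your proof is correct and follows essentially the same route as the paper: decompose a non-$3$-connected $M$ as a $1$- or $2$-sum, use non-regularity to force one summand to be non-regular (hence of rank at least $3$), and combine the inductive bound on that summand with \tref{max_AG32-free} and the low-rank table on the other to beat $\binom{r}{2}+4$ (the paper merely packages the low-rank slippage into uniform bounds $|N_1|\leq\binom{r_1}{2}+6$ and $|N_2|\leq\binom{r_2+1}{2}+1$ rather than treating the small ranks case by case). One small fix: the claim must also hold when $|M|=\binom{r}{2}+4$, since the characterization part relies on $3$-connectivity there, so state the contradiction against $|M|\geq\binom{r}{2}+4$; your own inequality in fact gives $|M|\leq\binom{r}{2}+4-(r_1-2)(r_2-1)\leq\binom{r}{2}+3$, which suffices.
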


\begin{cproof}
If not, then $M$ is the $1$- or $2$-sum of two matroids $N_1$ and $N_2$ of rank smaller than $r$. 
We consider only the case when $M$ is the $2$-sum of $N_1$ and $N_2$, as the other case is similar.
Neither $N_{1}$ nor $N_{2}$ has an $\ag$-minor.
As $M$ is non-regular, at least one of $N_1$ and $N_2$ is non-regular. We let $N_1$ be non-regular.
Let $r_1$ and $r_2$ denote the rank of $N_1$ and $N_2$ respectively. Then $r=r_1+r_2-1$ and
$r_{1},r_{2}\geq 2$.
As $M$ has no parallel pairs, we may assume $N_1$ has at most one parallel pair and $N_2$ is simple. 
By the minimality of $M$, \tref{max_AG32-free} and Table~\ref{tab:lowrank}, $|N_1| \leq \binom{r_1}{2}+6$ and 
$|N_2| \leq \textrm{max}\{\binom{r_2+1}{2}, \binom{r_2}{2}+4\}$.
If $r_2=2$, $|N_2|$ is at most $3$ and for $r_2 \geq 3$, 
$\textrm{max}\{\binom{r_2+1}{2}, \binom{r_2}{2}+4\} \leq \binom{r_2+1}{2}+1$.
Therefore 
\begin{align*}
|M|&=|N_1|+|N_2|-2 \leq \binom{r_1}{2} + \binom{r_2+1}{2} +5\\
&= \tfrac{1}{2}(r_1^2-r_1+r_2^2+r_2)+5\\
&=\binom{r}{2}+r_1+2r_2-r_1r_2+4.
\end{align*}
As $|M|\geq \binom{r}{2}+4$, it follows that $r_{1}+2r_{2}-r_{1}r_{2}\geq 0$.
It is easily verified that this cannot occur, since $r_1+r_2 \geq 8$ and $r_1\geq 3$ (because $N_1$ is non-regular).
It follows that $M$ is $3$-connected.
\end{cproof}

\begin{claim}\label{cl:manylines}
If $|M|=\binom{r}{2}+4+p$ then each element $x$ of $M$ such that $M/x$ is non-regular is in at least $r-2+p$ lines.
\end{claim}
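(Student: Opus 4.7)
The plan is to apply the inductive hypothesis to $N = si(M/x)$. The key observation is that the hypothesis ``$M/x$ is non-regular'' is exactly what lets us use the inductive statement of \tref{max_AG32-free_nonreg} (rather than \tref{max_AG32-free}), and the gap between the non-regular bound $\binom{r-1}{2}+4$ and the quantity $|M|-1$ will force many lines through $x$.

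First I would check that $N$ satisfies the induction hypothesis: it is simple by construction, binary and has no $\ag$-minor as a minor of $M$, has rank $r-1 \geq 7$, and is non-regular because $M/x$ is non-regular and simplification preserves non-regularity. Since $r-1 \geq 7 \geq 6$, the inductive hypothesis applies and yields
\[
|N| \leq \binom{r-1}{2} + 4.
\]

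Next I would relate $|N|$ to the number of lines through $x$. If $k$ denotes the number of lines of $M$ containing $x$, then each such line $\{x,a,b\}$ gives rise to a parallel pair $\{a,b\}$ in $M/x$, and conversely each parallel pair in $M/x$ comes from such a line (using that $M$ is simple). Moreover, two distinct lines through $x$ cannot share a second element: if $\{x,a,b\}$ and $\{x,a,c\}$ were both circuits with $b \neq c$, then taking the symmetric difference shows $\{b,c\}$ contains a circuit of $M$, contradicting simplicity. Hence the parallel classes in $M/x$ are exactly the $k$ pairs from these lines, and $|N| = |M| - 1 - k$.

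Combining the two bounds gives $|M| - 1 - k \leq \binom{r-1}{2} + 4$, and substituting $|M| = \binom{r}{2}+4+p$ rearranges to
\[
k \;\geq\; \binom{r}{2} - \binom{r-1}{2} - 1 + p \;=\; (r-1) - 1 + p \;=\; r - 2 + p,
\]
as required. I do not anticipate any real obstacle here: the claim is essentially a direct book-keeping consequence of the inductive hypothesis, and the only subtlety is confirming that distinct lines through $x$ contribute disjoint parallel pairs in $M/x$ so that the count $|si(M/x)| = |M| - 1 - k$ is exact rather than merely an inequality.
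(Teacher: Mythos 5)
Your proof is correct and is essentially identical to the paper's: both apply the inductive hypothesis to $N=si(M/x)$ (using that $M/x$ non-regular makes $N$ non-regular) to get $|N|\leq\binom{r-1}{2}+4$, and then count $|M|-|N|\geq r-1+p$ to conclude $x$ lies on at least $r-2+p$ lines. Your extra verification that distinct lines through $x$ share no second point, so that $|N|=|M|-1-k$ exactly, is a detail the paper leaves implicit but is correctly handled.
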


\begin{cproof}
Let $N=si(M/x)$. 
Then the induction hypotheses imply that $|N|\leq \binom{r-1}{2}+4$ and
\[|M|-|N| \geq \binom{r}{2}+4+p - \binom{r-1}{2}-4 = \tfrac{1}{2} (r^2-r-r^2+3r-2) +p = r-1+p.\]
Therefore $x$ is in at least $r-2+p$ lines.  
\end{cproof} 

\begin{claim}\label{cl:structure}
Suppose that $M$ contains an element $x$ such that $x$ is in no $F_7$-restriction and $si(M/x)$ is the parallel connection of $M(K_{r-1})$ and $F_7$. Then $M$ is the parallel connection of $M(K_r)$ and $F_7$.
\end{claim}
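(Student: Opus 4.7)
The plan is to adapt the strategy of the proof of \tref{max_AG32-free}, reducing to the graphic case by deleting the ``non-shared'' $F_7$-elements of $N = si(M/x)$. Let $F$ denote the $F_7$-restriction of $N$ and $L := F \cap E(M(K_{r-1}))$ the triangle along which $N$ is the parallel connection of $M(K_{r-1})$ and $F_7 = F$. Let $S \subseteq E(N)$ denote the elements in parallel pairs with $x$ in $M/x$, so $|S|$ equals the number of lines of $M$ through $x$. Since $N$ contains an $F_7$-minor, $M/x$ is non-regular, and \clref{manylines} forces $|S| \geq r - 2$; combining with \lref{lines} and the hypothesis that $x$ lies in no $F_7$-restriction, $S$ must be independent in $N$, so $|S| \leq r - 1$. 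Now let $(F \setminus L)^*$ denote the preimage in $M$ of $F \setminus L$, of size $4 + k$ with $k := |S \cap (F \setminus L)|$, and put $M'' := M \setminus (F \setminus L)^*$. Since $N \setminus (F \setminus L) = M(K_{r-1})$, we have $si(M''/x) = M(K_{r-1})$ and $\rk(M'') = r - 1$, so applying \tref{max_AG32-free} to $M''$ yields $|M''| \leq \binom{r}{2}$, hence $|M| \leq \binom{r}{2} + 4 + k$, with equality forcing $M'' = M(K_r)$.

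The crux is to show $k = 0$. Supposing for contradiction that some $e \in (F \setminus L) \cap S$ has partner $e'$, so $\{x, e, e'\}$ is a triangle, I would first verify that $M/e'$ is non-regular by exhibiting an $F_7$-restriction (namely $F$ itself, or a mild variant of it if the contraction creates a parallel pair inside $F$). Then \clref{manylines} gives at least $r - 2$ lines through $e'$, while a direct circuit count using the three lines $\{e, \ell_i, g_i\}$ ($i = 1, 2, 3$) of $F$ through $e$, with $\ell_i \in L$ and $g_i \in F \setminus L$, bounds the number of lines through $e'$ by $1 + \sum_{i=1}^{3} |\{\ell_i, g_i\} \cap S| \leq 7$. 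For $r \geq 10$ this already contradicts $r - 2 \leq 7$; for $r \in \{8, 9\}$ the bound forces at least five of the six elements $\{\ell_1, g_1, \ell_2, g_2, \ell_3, g_3\}$ to lie in $S$, so some line $\{e, \ell_i, g_i\}$ has both $\ell_i, g_i \in S$. The rank-$3$ plane $\mathrm{span}\{x, e, \ell_i\}$ then has all seven of its nonzero points, namely $x, e, e', \ell_i, x + \ell_i, g_i, x + g_i$, in $E(M)$, producing an $F_7$-restriction of $M$ through $x$ and contradicting the hypothesis.

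With $k = 0$ established, the size bounds combine to give $|M| = \binom{r}{2} + 4$, $|S| = r - 2$, $|M''| = \binom{r}{2}$, and $M'' = M(K_r)$. One then verifies that $r_M(F) = 3$ (the alternative would force an $\ag$-minor via a case analysis on the ``lift function'' $F_7 \to \{0, 1\}$ describing how $F$ sits in $M$, for instance by recognising $F \cup \{x\}$ as an $8$-element rank-$4$ no-triangle binary matroid), so $F$ is a genuine $F_7$-restriction of $M$ sharing precisely the triangle $L$ with $M(K_r) = M''$. Uniqueness of the binary generalized parallel connection along a triangle then identifies $M$ with the parallel connection of $M(K_r)$ and $F_7$ along $L$. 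The main obstacle is the $k = 0$ argument: ensuring the non-regularity of $M/e'$ in all sub-cases determined by the lift function may require supplying a substitute $F_7$-restriction in $M/e'$ or invoking an $\ag$-minor argument directly, and the counting step must be tight enough to handle the small ranks $r \in \{8, 9\}$ where the naive bound is not immediately contradictory.
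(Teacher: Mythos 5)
Your architecture is genuinely different from the paper's (which works through even cycle and graft representations of $M\backslash t$ and proves three structural conditions on a signed graph): you delete the preimage of $F\setminus L$, apply \tref{max_AG32-free} to the rank-$(r-1)$ remainder $M''$, and glue back. Several steps do check out: the independence of $S$, the identity $si(M''/x)=N\backslash(F\setminus L)=M(K_{r-1})$, the size bookkeeping, the bound of $7$ on the number of lines of $M$ through $e'$ (each of the three lines of $N$ through $e$ lifts to at most two lines through $e'$, and it lifts to two only when both of its non-$e$ elements lie in $S$), and the observation that $e,\ell_i,g_i\in S$ forces the seven points of $\mathrm{cl}_M(\{x,e,\ell_i\})$ to all lie in $E(M)$, i.e.\ an $F_7$-restriction through $x$. (Your displayed count $1+\sum_i|\{\ell_i,g_i\}\cap S|$ is not quite right when $\{x,e,\ell_i,g_i\}$ is a circuit of $M$, but the bound and the ``two doubled lines'' conclusion survive the correction.)

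The genuine gap is the one you flag yourself: the non-regularity of $M/e'$, without which \clref{manylines} gives you nothing and the whole $k=0$ argument collapses. This is automatic only when $r_M(F)=3$, for then $F$ is an $F_7$-restriction of $M$ and $e'\notin\mathrm{cl}_M(F)$ keeps it intact after contraction. But $r_M(F)=4$, i.e.\ $x\in\mathrm{cl}_M(F)$, cannot be excluded at that stage, and there it fails concretely: $e'\in\mathrm{cl}_M(F)$, contracting $e'$ drops $F$ to rank $3$ while identifying pairs of its elements (any Fano line through the ``odd'' point collapses), leaving fewer than seven points and no visible $F_7$. Worse, the rank-$4$ flat $W=\mathrm{cl}_M(F\cup\{x\})$ can genuinely contain no $F_7$-restriction at all: take $W$ to be the nine points of $PG(3,2)$ off the six-point set $\{e_1,e_2,e_1+e_2,e_3,e_4,e_1+e_3\}$ with $x=e_2+e_3+e_4$; every line through $x$ meets $W-x$, exactly one class is doubled, every plane of $PG(3,2)$ meets the deleted set, and the deleted set spans, so $W$ is $\ag$-free, Fano-free, and every transversal $F$ has rank $4$. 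So non-regularity of $M/e'$ must come from the global structure (the $M(K_{r-1})$ side and the extremality of $|M|$), and no such argument is supplied. The same unresolved rank-$4$ possibility undermines your final step $r_M(F)=3$: the ``eight-point rank-$4$ triangle-free binary matroid is $\ag$'' observation disposes only of the sub-case in which every Fano line is lifted by $x$, not the mixed sub-cases. It is telling that the paper's proof of the corresponding fact (its condition (c), after normalising $t\notin A$) occupies most of the claim's proof and needs the $K_{2,4}$-graft $\ag$-minor; this step is where the real work lives, and the proposal does not yet do it.
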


\begin{cproof}
Consider such an element $x$ and let $N=si(M/x)$.
\clref{manylines} implies that $x$ is in $k\geq r-2$ lines.
Let $\ell_1,\ldots,\ell_k$ be the lines through $x$.
 \lref{lines} and the fact that there is no $F_7$-restriction using $x$ implies that $r(\ell_1\cup \cdots \cup\ell_k)=k+1$.
Since $N$ is the generalized parallel connection of $M(K_{r-1})$ with $F_7$, $N$ is a graft matroid as described in \sref{graft}. More precisely, $N$ is the graft matroid of $(G,T)$, where $G$ is obtained from $K_{r-1}$ by adding a vertex $v$ adjacent to three vertices of the $K_{r-1}$ and $T$ is comprised of $v$ and its neighbours. Let the three neighbours of $v$ be $v_1,v_2,v_3$ and let $y_i=vv_i$ for $i=1,2,3$; denote by $z_1$ the edge forming a triangle with $y_2$ and $y_3$ and define $z_2$ and $z_3$ similarly. 
Let $Y=\{y_1,y_2,y_3\}$, $Z=\{z_1,z_2,z_3\}$ and $t$ be the graft element of $N$. Note that $Y\cup Z \cup \{t\}$ is an $F_7$-restriction of $N$ and the only lines containing $t$ in $N$ are of the form $\{t,y_i,z_i\}$, for $i=1,2,3$.

For $i=1,\ldots,k$, let $a_i$ be the point in $N$ that corresponds to the parallel pair $\ell_i - \{x\}$ in $M/x$. 
Let $A=\{a_1,\ldots,a_k\}$.
Since $r(\ell_1\cup \cdots \cup\ell_k)=k+1$ and $k\geq r-2$ we have that $A$ is an independent set of $N$ of size $r-1$ or $r-2$.
Since $Y \cup \{t\}$ is a circuit of $N$, there is an element in this set not contained in $A$.
It is easy to verify that $N\backslash y_{1}$ is graphic, and that in fact, $N\backslash y_{1}$
is equal to the matroid obtained from $M(G)$ by relabelling $y_{1}$ as $t$ and swapping the labels of $y_2$ and $y_3$.
Because $\{y_{1},y_{2},y_{3},t\}$ is a circuit of $N$, we can represent $N$ by
relabelling $G$ in this way, and then adding $y_{1}$ as a graft element
corresponding to the set $\{v,v_{1},v_{2},v_{3}\}$.
A similar symmetry applies to $y_{2}$ and $y_{3}$.
Because of these symmetries, we lose no generality in assuming that $t$ is not in $A$.

Now $M\backslash t$ is the lift of a graphic matroid (namely $M(G)$); that is,
$M\backslash t$ is an even cycle matroid.
Next we describe its representation as a signed graph $(G',\Sigma)$.
Construct the graph $G'$ from $G$ as follows: add a loop (corresponding to the element $x$) and then, for every $a_i \in A$, add an edge parallel to $a_i$ in $G$. Fix a basis $B$ of $M\backslash t$ containing $A$ and $x$; then $\Sigma$ is comprised of $x$ plus all the elements of $M$ whose fundamental circuit with $B$ contains $x$.
With this construction $M\backslash t=\ecycle(G',\Sigma)$, since $G'\backslash x$ is a graphic representation of $M\backslash t/x$ (see \sref{graft}).

To complete our proof we will prove that the following three
conditions hold:
\begin{itemize}
	\item[(a)] $G[A]$ is a star  $\delta_G(w)$;
	\item[(b)] every triangle of $G' - w$ is $\Sigma$-even;
	\item[(c)] $Y \cap A = \emptyset$.
\end{itemize}

First we show that these conditions imply the result.
Assume that (a), (b), and (c) hold.
Condition (b) implies that every cycle of $(G'-w)\backslash x$ is $\Sigma$-even.
Therefore there exists a signature $\Sigma'$ of $(G',\Sigma)$ such that $\Sigma' - \{x\}$ is contained in $\delta_{G'}(w)$.
For every $i \in [k]$, let $b_i$ be the edge of $G'$ parallel to $a_i$; since $\{a_i,b_i\}$ is a $\Sigma$-odd cycle of $G'$, it is also a $\Sigma'$-odd cycle. Therefore $\Sigma'$ contains $x$ and exactly one of $a_i,b_i$ for every $i$.
Let $H$ be the unsigned graph obtained from $G'$ by splitting $w$ into vertices
$r$ and $s$, where $x$ is the edge joining $r$ and $s$, and all edges
in $\Sigma' -\{x\}$ are incident with $r$, while each edge in $\delta_{G'}(w) - \Sigma'$ is incident with $s$.
Note that condition (c) implies $v$ is incident with exactly three edges in $H$, namely
$y_{1}$, $y_{2}$, and $y_{3}$ and $H - v$ is isomorphic to $K_r$.
Condition (b) implies that $M(H)=M\backslash t$, as discussed in \sref{graft}.
Thus $M$ is a graft of the form $(H,S)$, where $t$ is the graft element corresponding to $S$.
Since $M/x\backslash \{b_1,\ldots,b_k\}$ is the graft matroid of $(G,\{v,v_{1},v_{2},v_{3}\})$, we can assume that
either (i) $S=\{v,v_{1},v_{2},v_{3}\}$,
or (ii) $r$ or $s$ is in $\{v_{1},v_{2},v_{3}\}$, and $S=\{v,v_{1},v_{2},v_{3}\}\symdiff\{r,s\}$,
or (iii) neither $r$ nor $s$ is in $\{v_{1},v_{2},v_{3}\}$, and $S=\{v,v_{1},v_{2},v_{3},r,s\}$.
It is easily verified that in case (ii), $t$ is in exactly two lines in $M$, and that in
case (iii), $t$ is in no lines at all.
Therefore if (ii) or (iii) holds, then $|M/t|\geq |M|-3\geq\binom{r}{2}+1$, a contradiction to \tref{max_AG32-free}.
Hence case (i) holds, and $M$ is the generalized parallel connection of $M(K_r)$ with $F_7$.
We conclude that to complete the proof of the claim we only need to show that conditions (a), (b) and (c) hold.

We start by showing that $G[A-Y]$ is a star.
Since $A$ is an independent set in $N$, $G[A]$ is a forest.
Let $E'=E(G)-(Y\cup Z\cup A)$. Since $G'$ is obtained from $G$ by adding edges, we may see $E'$ as a subset of edges of $G'$.
For every edge $e \in E'$, there is an $F_{7}$-restriction,
$\{t\}\cup Y \cup Z$, in $N/e$, so $M/e$ is non-regular. It follows, by \clref{manylines}, that every $e \in E'$ is in at least $r-2$ lines in $M$.
No element in $E'$ is in a line with $t$ in $N$, since the only lines containing $t$ are given by $T$-joins of size two.
This implies there is no line of $M$ containing $t$ and an element of $E'$.
Moreover, since no elements of $A$ are in $E'$, no elements of $E'$ are in a digon in $G'$. 
Assume that $e \in E'$ is not in a triangle with two edges in $A$.
Observe that $e$ is not in a triangle of $G'$ with $v$, as $e\notin Y\cup Z$.
Since $G'-v$ has $r-1$ vertices, $e$ is in at most $r-3$ $\Sigma$-even triangles in $(G',\Sigma)$.
It follows that $e$ is in at most $r-3$ lines in $M$, a contradiction. 
Therefore every $e \in E'$ is in a triangle with two edges in $A$.
From this we deduce that $A-Y$ is a tree spanning $G-v$ and, since $G-v$ is isomorphic to $K_{r-1}$ and $r-1\geq 7$, there exists a leaf $u$ of $G[A-Y]$ not in $\{v,v_1,v_2,v_3\}$.
For every other leaf $u'$ of $G[A-Y]$, the edge $uu'$ is in $E'$, hence the $uu'$-path in $G[A-Y]$ must have length two.
It follows that $G[A-Y]$ is a star $\delta_G(w)$ spanning $G-v$.


Since every edge in $E'$ is in at least $r-2$ lines of $M$,
every triangle of $G'$ containing an edge of $E'$ and not using
the vertex $w$ is $\Sigma$-even.
This implies that if $w \notin \{v_1,v_2,v_3\}$ the triangle $\{z_1,z_2,z_3\}$ is also $\Sigma$-even, by the following argument.
Consider a vertex $u \in V(G')-\{v,v_1,v_2,v_3,w\}$ and let $H$ be the induced $K_4$ of $G'$ formed by $\{u,v_1,v_2,v_3\}$. Then every triangle of $H$ other than $Z$ is $\Sigma$-even. Since $Z$ is the symmetric difference of the other three triangles in $H$, $Z$ is also $\Sigma$-even.

Since $|A| \leq r-1$, the fact that $A-Y$ spans $G-v$ implies in particular that $|A \cap Y|\leq 1$.
If (c) fails we may assume that $y_1 \in A$.
If $y_{2}$ is in at most $2$ lines, then
$|M/y_{2}|\geq |M|-3\geq \binom{r}{2}+1$, and we have
a contradiction.
Therefore $y_2$, and by symmetry $y_3$, are in at least three lines of
$M$.
This means the triangles $\{z_1,y_2,y_3\}$,
$\{z_{2},y_{1},y_{3}\}$, and $\{z_{3},y_{1},y_{2}\}$
are $\Sigma$-even.
Therefore we may assume (up to resigning) that no edge of $E(G)-A$ is in $\Sigma$.

To conclude the proof of (c) (and consequently the proof of (a)), we need to consider two cases: either $w_1=v_1$ or not (recall that we are assuming that $y_1 \in A$ and $y_2,y_3 \notin A$). First we consider the case that $w\neq v_1$. By symmetry between $v_2$ and $v_3$, we may assume that $w \neq v_2$; let $(H,\Gamma)$ be the subgraph of $(G',\Sigma)$ induced by $\{v,v_1,v_2,w\}$ together with the loop $x$. Then removing parallel edges of the same parity from $(H,\Gamma) /y_2$ we obtain the signed graph representation of $F_7$ (see Figure~\ref{fig:F7ecycle}). Therefore $M/y_2$ is non-regular. It follows that $y_2$ must be in at least $r-2 \geq 6$ lines in $M$. However $y_2$ is in only two triangles in $G'$, so $y_2$ is in at most $4$ lines in $M$, a contradiction. It follows that this case does not occur, i.e. $w=v_1$. This (together with the previous paragraph) implies that every $\Sigma$-odd cycle of $G'$ (other than $x$) uses the vertex $v_1$. Hence $M\backslash t$ is the graphic matroid of the graph $H$ which is obtained from $G'$ by splitting $v_1$. It follows that $M$ is the graft matroid of $(H,S)$, for the appropriate choice of $S$. By the configuration of the edges in $A$, $H$ is obtained from $K_r$ by adding a vertex $v$ adjacent to four vertices of the $K_r$; the edges incident with $v$ are $y_1,y_2,y_3,x$.
Since $Y \cup \{t\}$ is a circuit of $N$, one of $Y\cup \{t\}$ or $Y \cup \{x,t\}$ is a circuit of $M$. First suppose that $Y \cup \{t\}$ is a circuit of $M$; in this case $Y$ is an $S$-join, i.e. $S$ is the set of vertices of odd degree in $H[Y]$. Therefore $(H,S)/x=(H',S')$, where $H'$ is isomorphic to $K_r$ and $|S'|=4$. Since $r\geq 6$, this graft contains the graft in Figure~\ref{fig:K24graft} as a minor; since the graft matroid of this graft contains $AG(3,2)$ as a minor (see \sref{graft}), this case cannot occur. The case where $Y \cup \{x,t\}$ is similar, only now $S$ is the set of neighbours of $v$ and  $(H',S')$ is obtained by deleting $v$.
This concludes the proof of conditions (a) and (c). 

To prove (b) it only remains to show that every triangle of $G' - w$ containing edges in $Y$ is $\Sigma$-even. 
If not, we may assume that $\{y_1,y_2,z_3\}$ is $\Sigma$-odd; since (c) holds, the only lines of $M$ through $y_1$ are $\{y_1,z_1,t\}$ and, possibly, $\{y_1,z_2,y_3\}$, so $y_1$ is in only two lines of $M$, a contradiction.
\end{cproof}

\begin{claim}\label{cl:rightsize}
$|M|=\binom{r}{2}+4$.
\end{claim}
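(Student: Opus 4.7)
The plan is to argue by contradiction: assume $|M|=\binom{r}{2}+4+p$ with $p\geq 1$, and derive that $M$ must be the generalized parallel connection of $M(K_r)$ with $F_7$, which has size exactly $\binom{r}{2}+4$, contradicting $p\geq 1$.

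The heart of the argument is to exhibit an element $x\in E(M)$ satisfying (i) $M/x$ is non-regular and (ii) $x$ lies in no $F_7$-restriction of $M$. I would produce such an $x$ by case analysis on how the non-regularity of $M$ is witnessed. If $M$ has an $F_7$-restriction $R$, then \lref{two_F7} makes $R$ unique; simplicity of $M$ forces $\mathrm{cl}_M(R)=R$, so any $x\in E(M)\setminus R$ (which exists since $|M|>7$) satisfies $M/x\,|\,R=F_7$ and misses every $F_7$-restriction. If $M$ has an $F_7$-minor but no $F_7$-restriction, then any witness $M/C\setminus D\cong F_7$ must have $C\neq\emptyset$ because $r(M)\geq 8>3$, and any $x\in C$ works. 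Finally, if $M$ has only an $F_7^*$-minor: when $F_7^*$ is a restriction, $\mathrm{cl}_M(F_7^*)$ is a rank-$4$ flat containing at most $15$ elements, and since $|M|\geq\binom{r}{2}+5>15$ we pick $x\notin\mathrm{cl}_M(F_7^*)$; the absence of $3$-circuits in $F_7^*$ prevents any pair of its elements from becoming parallel in $M/x$, so $M/x\,|\,F_7^*=F_7^*$. When the $F_7^*$-minor requires contraction, $x$ is chosen from the contraction set.

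With $x$ in hand, I would pin down $\ell_x$, the number of lines of $M$ through $x$. \clref{manylines} gives $\ell_x\geq r-2+p\geq r-1$. Conversely, \lref{lines} applied contrapositively --- using that $M$ has no $\ag$-minor, $x$ lies in no $F_7$-restriction, and $r(\ell_1\cup\cdots\cup\ell_k)\leq r$ for any family of lines through $x$ --- gives $\ell_x\leq r-1$. Hence $\ell_x=r-1$, and the identity $|si(M/x)|=|M|-\ell_x-1$ yields $|si(M/x)|=\binom{r-1}{2}+3+p$. Since $M/x$ is non-regular, so is $si(M/x)$; the inductive hypothesis applied at rank $r-1\geq 7$ forces $|si(M/x)|\leq\binom{r-1}{2}+4$, so $p=1$ and equality holds.

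The uniqueness part of the induction hypothesis identifies $si(M/x)$ with the generalized parallel connection of $M(K_{r-1})$ and $F_7$, and \clref{structure} then forces $M$ to be the generalized parallel connection of $M(K_r)$ and $F_7$, whose size is exactly $\binom{r}{2}+4$ --- contradicting $|M|=\binom{r}{2}+5$. The hardest step will be the first: locating a single element that simultaneously makes $M/x$ non-regular and avoids every $F_7$-restriction. The $F_7^*$-restriction sub-case is the most delicate, since the preservation of $F_7^*$ under contraction requires the rank condition $x\notin\mathrm{cl}_M(F_7^*)$, which is only available because $|M|$ substantially exceeds the maximum size of a rank-$4$ flat in a binary matroid.
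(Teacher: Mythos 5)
Your proof is correct and converges on the same endgame as the paper: an element $x$ such that $M/x$ is non-regular, $x$ lies in no $F_7$-restriction, and $x$ is in exactly $r-1$ lines, whence $|si(M/x)|=\binom{r-1}{2}+4$, the induction hypothesis pins down $si(M/x)$, and \clref{structure} forces $M$ to be the generalized parallel connection of $M(K_r)$ and $F_7$, contradicting $|M|\geq\binom{r}{2}+5$. Where you genuinely diverge is in producing $x$. The paper invokes the dual of Oxley's Proposition 12.2.3 to obtain an $F_7$-minor, takes an independent set $I$ of five elements with $M/I$ non-regular, notes that each element of $I$ is either in exactly $r-1$ lines or in an $F_7$-restriction, and uses \lref{two_F7} together with the fact that five independent elements cannot all lie in one rank-$3$ restriction to find the desired element inside $I$. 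You instead argue by cases on how an $F_7$- or $F_7^*$-witness sits in $M$: uniqueness of an $F_7$-restriction (again from \lref{two_F7} and $3$-connectivity) plus $cl(R)=R$ lets you contract any point outside it; a proper minor witness supplies a contractible element; and the $F_7^*$-restriction case is handled by stepping outside a rank-$4$ flat. Your route needs only Tutte's excluded-minor characterisation of regular matroids rather than Oxley 12.2.3 --- though, amusingly, that proposition makes your third case vacuous. Two small slips in your justifications, neither fatal: in your second case, $C\neq\emptyset$ follows from the absence of an $F_7$-\emph{restriction} (the case hypothesis), not from $r(M)>3$, since a restriction may have smaller rank than $M$; and in the $F_7^*$-restriction case, the reason no pair $a,b$ of its elements becomes parallel in $M/x$ is that $x\notin cl(\{a,b\})\subseteq cl(R)$, not the absence of triangles in $F_7^*$.
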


\begin{cproof}
Suppose otherwise, i.e. $|M|\geq \binom{r}{2}+5$.
Let $x$ be any element of $M$ such that $M/x$ is non-regular; \clref{manylines} implies that $x$ is in at least $r-1$ lines.  
If $x$ is in $r$ (or more) lines, then \lref{lines} implies that there is an $F_7$-restriction of $M$ using $x$. 
It follows that for every element $x$ of $M$ such that $M/x$ is non-regular, either $x$ is in exactly $r-1$ lines or $x$ is in an $F_7$-restriction.
The matroid $M$ is non-regular, $3$-connected and is not isomorphic to $F_7^*$; the dual of Proposition 12.2.3 in~\cite{MR2849819} implies that $M$ has an $F_7$-minor. 
As $r\geq 8$, there is an independent set $I=\{x_1,x_2,x_3,x_4,x_5\}$ of $M$ such that $M/I$ has an $F_7$-minor (hence is non-regular). Each element in $I$ is either in an $F_7$-restriction or is in exactly $r-1$ lines. If every element in $I$ is in an $F_7$-restriction of $M$, then $M$ contains at least two distinct $F_7$-restrictions (because the rank of $F_7$ is three) and by \lref{two_F7} $\ag$ is a minor of $M$. It follows that there exists an element $x$ of $M$ such that $M/x$ is non-regular and $x$ is in exactly $r-1$ lines.
Therefore $|si(M/x)|\geq \binom{r}{2}+5-r=\binom{r-1}{2}+4$ and the inductive hypotheses imply that $si(M/x)$ is the generalized parallel connection of $M(K_{r-1})$ with $F_7$; \clref{structure} implies that $M$ is the generalized parallel connection of $M(K_r)$ with $F_7$.
However this is not possible, since we are assuming that $|M| \geq \binom{r}{2}+5$. We conclude that this case does not occur, i.e. $|M|=\binom{r}{2}+4$.
\end{cproof}

\subsection{The characterization}\label{sec:char_nonreg}
We conclude the proof of \tref{max_AG32-free_nonreg} by characterizing the non-regular matroids of maximum size. More specifically, we prove that $M$ is the generalized parallel connection along a line of $M(K_r)$ and $F_7$.

Consider an element $x$ of $M$ such that $M/x$ is non-regular. Let $N=si(M/x)$.
\clref{manylines} implies that $x$ is in at least $r-2$ lines.
We may again choose $x$ so that there is no $F_7$-restriction of $M$ using $x$.
Therefore (by \lref{lines}) $x$ is in either $r-2$ or $r-1$ lines. 
If $x$ is in exactly $r-2$ lines, then $|N|=\binom{r-1}{2}+4$, so $N$ is the generalized parallel connection of $M(K_{r-1})$ with $F_7$ and by \clref{structure} we conclude that $M$ is the generalized parallel connection of $M(K_r)$ with $F_7$.

It remains to consider the case when $x$ is in exactly $r-1$ lines $\ell_1,\ldots,\ell_{r-1}$, where $r(\ell_1\cup\cdots\cup\ell_{r-1})=r$.
Let $L=\ell_1\cup\cdots\cup\ell_{r-1}$. Then $M=cl(L)$.
For $i=1,\ldots,r-1$, let $a_i$ and $b_i$ be the points in $\ell_i$ other than $x$.
Suppose that every element of $M-L$ is on a line containing two points from $L$. Then in the standard binary matrix representation of $N$ with respect to the basis $\{a_1,\ldots,a_{r-1}\}$, every column has at most two non-zero entries; therefore $N$ is a graphic matroid. However, $x$ was chosen so that $N$ is non-regular. It follows that there exists an element $y$ such that $y$ is not on a line with two elements from $L$. We call such a $y$ a {\em special element} of $M$.
In the remainder of the proof we will analyse the special elements of $M$: we show that each special element $y$ is in exactly $r-2$ lines of $M$ and that $M/y$ is non-regular. Moreover, we will show that we have enough special elements to be able to choose one of them to not be in an $F_7$-restriction; this concludes the proof by induction and by \clref{structure}.

Let $y$ be any special element of $M$ and let $C$ be the fundamental circuit of $y$ with respect to the basis $\{x,a_1,\ldots,a_{r-1}\}$ of $M$. If $x$ and $a_i$ are in $C$, then we may consider, instead of $C$, the circuit $C \symdiff \ell_i$ and swap the labels of $a_i$ and $b_i$: by doing so we may assume that $x \notin C$.
Hence we may assume that $C=\{y,a_1,\ldots,a_k\}$, with $k \geq 3$. If $k\geq 4$ then in $M/\{y,a_5,\ldots,a_k\}$ the set $(\ell_1\cup \ell_2 \cup \ell_3\cup \ell_4)-\{x\}$ is an $\ag$-restriction. Therefore $k=3$ and $y$ is in a circuit with $a_1,a_2,a_3$. In particular, $M/y$ contains an $F_7$-restriction (with lines $\ell_1,\ell_2,\ell_3$).
It follows that for every special element $y$ of $M$ the following properties hold:

\begin{itemize}
	\item[\textbf{(S1)}] $y$ is in a circuit $C=\{y,c_i,c_j,c_k\}$ for distinct $i,j,k$, where $c_i \in \{a_i,b_i\}$,  $c_j \in \{a_j,b_j\}$ and  $c_k \in \{a_k,b_k\}$.
	\item[\textbf{(S2)}] $M/y$ is non-regular. 
\end{itemize}
Next we show that

\begin{itemize}
\item[\textbf{(S3)}] if $y$ is in a line $\{y,c,d\}$, then one of $c$ or $d$ is either in $L$ or is special.
\end{itemize}

Suppose that $y$ is in a line $\{y,c,d\}$, where $c,d\notin L$. To simplify the exposition, suppose that $\{y,a_1,a_2,a_3\}$ is a circuit as in (S1). If $c$ is not special, then $c$ is in some line $\{c,c_1,c_2\}$, where $c_1,c_2 \in L$ and $c_1,c_2$ are in distinct lines with $x$. 
It follows that $\{y,c,d\}\symdiff\{y,a_1,a_2,a_3\}\symdiff\{c,c_1,c_2\}=\{d\}\cup(\{a_1,a_2,a_3\}\symdiff\{c_1,c_2\})=D$ is a disjoint union of circuits of $M$; since the set $\{a_1,a_2,a_3\}\symdiff\{c_1,c_2\}$ does not contain any lines, $D$ is actually  a circuit of $M$. It remains to show that $\{c_1,c_2\} \not\subset \{a_1,a_2,a_3,b_1,b_2,b_3\}$, as then $d$ is special and (S3) holds.
Note that $\{y,a_1,a_2,a_3\} \symdiff \ell_1 \symdiff \ell_2=\{y,b_1,b_2,a_3\}$ is also a circuit of $M$ (and so are $\{y,a_1,b_2,b_3\}$ and $\{y,b_1,a_2,b_3\}$). Therefore, to show that $\{c_1,c_2\} \not\subset \{a_1,a_2,a_3,b_1,b_2,b_3\}$, we only need to consider three cases:
\begin{itemize}
	\item $\{c_1,c_2\}=\{a_1,a_2\}$: in this case $D=\{d,a_3\}$, but $M$ is a simple matroid.
	\item $\{c_1,c_2\}=\{a_1,b_1\}$: this implies that $c=x$, but $c$ is not in $L$.
	\item $\{c_1,c_2\}=\{a_1,b_2\}$: this would imply that $D=\{d,a_2,b_2,a_3\}$, hence $D\symdiff \ell_2= \{d,x,a_3\}$ is a line of $M$, but this is not possible, since $d \notin L$.
\end{itemize}
The next property we show for a special element $y$ is:

\begin{itemize}
\item[\textbf{(S4)}] $y$ is in at most two lines containing an element in $L$.
\end{itemize}
To prove (S4), we require the following technical result, which forbids certain configurations of pairs of special points.
We defer the proof of this result to the very end of the paper.

\begin{lemma}\label{lem:two_disj_special}
Let $M$ be a $3$-connected binary matroid. Suppose that an element $x$ of $M$ is in distinct lines $\ell_1,\ldots,\ell_{r-1}$ such that $r(\ell_1\cup\cdots\cup\ell_{r-1})=r$. 
Let $c_i$ be an element in $\ell_i -\{x\}$ for every $i$.
Suppose that there exist two elements $y_1$ and $y_2$ such that $\{y_1,c_1,c_2,c_3\}$ and one of
$\{y_2,c_3,c_4,c_5\}$ or $\{y_2,c_4,c_5,c_6\}$ are circuits of $M$.
Then $M$ contains an $\ag$-minor.
\end{lemma}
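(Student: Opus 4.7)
The plan is to apply Lemma~\ref{lem:lifting_circuit} in the first sub-case and to reduce the second sub-case to Lemma~\ref{lem:two_F7}. Throughout, since each $\ell_i=\{x,c_i,d_i\}$ is a triangle of $M$, the pair $\{c_i,d_i\}$ is a parallel pair of $M/x$, and the rank hypothesis $r(\ell_1\cup\cdots\cup\ell_{r-1})=r$ forces $c_1,\ldots,c_{r-1}$ to be a basis of $\mathrm{si}(M/x)$. Both of the hypothesised $4$-element sets therefore remain circuits after contracting $x$.

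\emph{First sub-case: $\{y_2,c_3,c_4,c_5\}$ is a circuit.} I would pass to $M/\{x,y_1,y_2\}$, in which the two given circuits become the triangles $\{c_1,c_2,c_3\}$ and $\{c_3,c_4,c_5\}$. Because $M$ is binary, their symmetric difference $\{c_1,c_2,c_4,c_5\}$ is a disjoint union of circuits, and a short rank computation using the independence of $c_1,\ldots,c_{r-1}$ in $\mathrm{si}(M/x)$ shows that no proper subset is dependent: the set is a single $4$-circuit of $M/\{x,y_1,y_2\}$, itself a minor of $M/x$. Each of its elements lies in a parallel pair of $M/x$, so Lemma~\ref{lem:lifting_circuit} yields an $\ag$-minor of $M$.

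\emph{Second sub-case: $\{y_2,c_4,c_5,c_6\}$ is a circuit.} Now the two triangles produced in $M/\{x,y_1,y_2\}$ are disjoint, and no $4$-circuit among the $c_i$'s arises. Instead I would consider $M'=M/\{y_1,y_2\}$ (without contracting $x$). Contracting $y_1$ turns $\{y_1,c_1,c_2,c_3\}$ into the triangle $\{c_1,c_2,c_3\}$, and combining this triangle with the three triangles $\{x,c_i,d_i\}$ and the three $4$-circuits $\{c_i,d_i,c_j,d_j\}$ on $\{x,c_1,d_1,c_2,d_2,c_3,d_3\}$ realises this set as an $F_7$-restriction $R_1$ of $M'$. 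Symmetrically, $R_2:=\{x,c_4,d_4,c_5,d_5,c_6,d_6\}$ is an $F_7$-restriction of $M'$ with $R_1\cap R_2=\{x\}$. I would then apply Lemma~\ref{lem:two_F7} to a suitable $3$-connected minor obtained from $M'$ to deduce an $\ag$-minor of $M$.

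The main obstacle lies in this final step: the restriction of $M'$ to $R_1\cup R_2$ is a parallel connection of two $F_7$'s along $x$ and is not $3$-connected, so Lemma~\ref{lem:two_F7} cannot be invoked on $M'$ directly. To get around this I would use the $3$-connectivity of $M$ and Bixby's lemma (mimicking the reduction step in the proof of Lemma~\ref{lem:two_F7}) to iteratively trim elements outside $R_1\cup R_2\cup\{y_1,y_2\}$ until reaching a $3$-connected minor $N$ of $M$ in which $R_1$ and $R_2$ remain $F_7$-restrictions of $N/\{y_1,y_2\}$. The requirement that six distinct lines $\ell_i$ be available forces $r\geq 7$, giving enough ambient elements to bridge the two $F_7$'s, after which Lemma~\ref{lem:two_F7} supplies the required $\ag$-minor.
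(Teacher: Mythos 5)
Your first sub-case is correct and is essentially the paper's argument: both $4$-sets remain circuits of $M/x$ because $\{x,c_1,\ldots,c_5\}$ is independent, their symmetric difference yields the $4$-circuit $\{c_1,c_2,c_4,c_5\}$ of $M/\{x,y_1,y_2\}$, and \lref{lifting_circuit} finishes. The identification of the two $F_7$-restrictions $R_1$ and $R_2$ of $M/\{y_1,y_2\}$ meeting in $x$ in the second sub-case is also correct.

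The gap is in the final step of the second sub-case, and it is not a technicality --- it is the whole difficulty. \lref{two_F7} requires the \emph{ambient} matroid containing the two $F_7$-restrictions to be $3$-connected (and simple), so what you actually need is that $si(M/\{y_1,y_2\})$, or the corresponding contraction of some minor $N$, is $3$-connected. Your proposed fix --- Bixby-trimming elements of $M$ outside $R_1\cup R_2\cup\{y_1,y_2\}$ to reach a $3$-connected $N$ --- does not deliver this: $3$-connectivity of $N$ says nothing about $3$-connectivity of $si(N/\{y_1,y_2\})$, and contracting two elements of a $3$-connected matroid can easily destroy $3$-connectivity. Nor can the connectivity hypothesis be dispensed with: the parallel connection of two copies of $F_7$ at a point has no $\ag$-minor (every $3$-connected minor of a $2$-sum lives on one side), so the mere presence of $R_1$ and $R_2$ in $M/\{y_1,y_2\}$ proves nothing without a genuine connectivity argument, and establishing that connectivity is essentially as hard as the lemma itself. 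The paper avoids this entirely by staying in $M$: a minimal-counterexample argument (with Bixby's lemma applied to elements outside $cl(L_1\cup L_2)$, where $L_i$ are the two halves of the configuration) reduces to the case $cl(L_1\cup L_2)=E(M)$; then $3$-connectivity forces an element $e$ lying in neither $cl(L_1)$ nor $cl(L_2)$, and an analysis of the fundamental circuit of $e$ with respect to $\{c_1,\ldots,c_6\}$ in $M/x$ produces, after symmetric differences with the two given circuits, a $4$-element circuit among the $c_i$'s in a contraction of $M/x$, so that \lref{lifting_circuit} again applies. If you want to keep your two-Fano strategy, you would need to prove a strengthened form of \lref{two_F7} adapted to Fano restrictions of a fixed contraction of a $3$-connected matroid, which is not available in the paper and would require its own inductive connectivity argument.
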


Now we can proceed to show that (S4) holds.
To simplify the exposition suppose that $C=\{c_1,c_2,c_3,y\}$ is a circuit as in (S1), where $c_i \in \ell_i$ for $i=1,2,3$.
First we show that $y$ is not in a line with $a_j$ or with $b_j$ for $j\geq 4$.
Suppose that $y$ is in a line $\ell=\{y,c_j,d\}$, where $c_j \in \ell_j$ and $j\geq 4$. 
The element $y$ is special, therefore $d \notin L$.
Therefore $\ell\symdiff C=\{d,c_1,c_2,c_3,c_j\}$ is a circuit and in $M/d$ the elements in $(\ell_1\cup\ell_2\cup\ell_3\cup\ell_j)-\{x\}$ form an $\ag$-restriction.
Next we show that $y$ is not in two lines $\{y,c,a_j\}$ and $\{y,d,b_j\}$; if this is the case, then $\{c,d,x\}$ is a line, therefore $c,d\in L$, contradicting the fact that $y$ is special.

It follows that if $y$ is in three lines with elements in $L$, then we may assume these lines are $\{y,a_1,d_1\}$, $\{y,a_2,d_2\}$, $\{y,a_3,d_3\}$, where $d_1,d_2,d_3 \notin L$. By possibly taking the symmetric difference of $C$ with two of the lines $\ell_1, \ell_2, \ell_3$, we may assume that $C=\{a_1,a_2,c_3,y\}$, where $c_3 \in \{a_3,b_3\}$.

If $c_3=b_3$, then consider the matrix representation for $M$ with respect to the basis $\{x,a_1,a_2,\ldots,a_{r-1}\}$. Contracting $a_4,\ldots,a_{r-1}$ we obtain the following submatrix:

 \begin{displaymath}
 \kbordermatrix{
 &x & a_1 & a_2 & a_3 & b_1 & b_2 & b_3 & y & d_1 & d_2 & d_3 \cr
 &1 & 0 & 0 & 0 & 1 & 1 & 1 & 1 & 1 & 1 & 1\cr
 &0 & 1 & 0 & 0 & 1 & 0 & 0 & 1 & 0 & 1 & 1\cr
 &0 & 0 & 1 & 0 & 0 & 1 & 0 & 1 & 1 & 0 & 1\cr
 &0 & 0 & 0 & 1 & 0 & 0 & 1 & 1 & 1 & 1 & 0\cr}
 \end{displaymath}
 
Adding the second, third and fourth rows to the first and deleting the columns corresponding to $a_1,a_2$ and $a_3$ we obtain the following matrix, which is a representation of $\ag$.
 
 \begin{displaymath}
 \kbordermatrix{
 &x &  b_1 & b_2 & b_3 & y & d_1 & d_2 & d_3 \cr
 &1 & 0 & 0 & 0 & 0 & 1 & 1 & 1\cr
 &0 & 1 & 0 & 0 & 1 & 0 & 1 & 1\cr
 &0 & 0 & 1 & 0 & 1 & 1 & 0 & 1\cr
 &0 & 0 & 0 & 1 & 1 & 1 & 1 & 0\cr}
 \end{displaymath}

 It follows that $c_3=a_3$. By (S2), $y$ is in at least $r-2$ lines. Moreover, at most three of these lines are with elements in $L$. By (S3) this implies that there exists at least another special element $y'$. Let $C'$ be a circuit for $y'$ as in (S1). By \lref{two_disj_special} (and possibly taking the symmetric difference of $C'$ with two of the lines spanning $y'$) we may assume that $a_1,a_2 \in C'$. If $b_3 \in C'$, then $C\symdiff C' \symdiff \ell_3=\{x,y,y'\}$ is a line of $M$, contradicting the fact that all the lines through $x$ are in $L$. Therefore we may assume that $C'=\{y',a_1,a_2,a_4\}$. It is routine to check that $\{x,y,a_3,b_1,b_2,b_4,d_1,d_2\}$ is an $\ag$-restriction of $M/y'$. We conclude that (S4) holds.

Let $S=\{y_1,\ldots,y_s\}$ be the set of all special elements of $M$.
We already showed that $S$ is non-empty.
Because of property (S2), $y_1$ is in at least $r-2$ lines. By (S3) and (S4), there are at least $r-4\geq 4$ special elements distinct from $y_1$. Therefore $s \geq 5$.
For every $t \in [s]$, let $C_t$ be a circuit containing $y_t$ as in (S1).
Note that if $C_t=\{y_t,c_i,c_j,c_k\}$, where $c_i \in \ell_i$ and so on, then $C_t \symdiff \ell_i \symdiff \ell_j$ is also a circuit for $y_i$ as in (S1); we will make repeated use of this simple observation.

The final property we require for special elements is the following.

\begin{itemize}
\item[\textbf{(S5)}] We may assume that $C_i=\{a_1,a_2,a_{i+2}\}$ for every $i=1,\ldots,s$. In particular $s\leq r-3$.
\end{itemize}
By property (S1), every special element $y$ is associated with a triple $\{i,j,k\}$, where $y$ is in the span of $\ell_i\cup \ell_j \cup \ell_k$.
For every special element $y_t$ let the associated triple be $T_t$.
First we show that for every distinct $i,j \in [s]$ the triples $T_i$ and $T_j$ share exactly two elements. 
By \lref{two_disj_special}, $|T_i \cap T_j|\geq 2$. If this quantity is $3$ then $\{x,y_1,y_2\}$ is a line of $M$, a contradiction. 
Therefore, for all distinct $i,j \in [s]$, $|T_i \cap T_j|= 2$. 
Since $s\geq 5$, this implies that there are indices $t_1,t_2 \in [r-1]$ such that $T_i \cap T_j=\{t_1,t_2\}$ for all distinct $i,j \in [s]$. By relabelling we may assume that $C_i=\{a_1,a_2,a_{i+2}\}$ for every $i=1,\ldots,s$, and property (S5) holds.

By (S2) and \clref{manylines}, every special element is in at least $r-2$ lines. By (S3) and (S4), every $y_i$ is in at most $(|S|-1)+2=|S|+1$ lines.
By (S5), $|S|\leq r-3$. Therefore $|S|=r-3$ and every special element is in exactly $r-2$ lines. 
Moreover $S$ is an independent set: if not, it contains a circuit $C$ and we may assume $C=\{y_1,\ldots,y_k\}$.
Now $C\symdiff C_1 \symdiff \cdots \symdiff C_k \subseteq \{a_1,\ldots,a_{k+2}\}$ is dependent, a contradiction.

Since $S$ is independent and $s\geq 4$, we may choose a special element $y$ that is not in an $F_7$-restriction, or we would obtain an $\ag$-minor by \lref{two_F7}.
Since $y$ is in exactly $r-2$ lines of $M$ and $M/y$ is non-regular (property (S2)),
we conclude the proof of \tref{max_AG32-free_nonreg} by \clref{structure}.


\bigskip
\begin{proof}[Proof of \lref{two_disj_special}]
Assume that the lemma fails for $M$, where $M$ is as small as possible. 
First we assume that $\{y_2,c_3,c_4,c_5\}$ is a circuit of $M$.
It follows easily from the hypotheses that $\{x,c_{1},c_{2},c_{3},c_{4},c_{5}\}$ is independent in
$M$, so $\{c_{1},c_{2},c_{3},c_{4},c_{5}\}$ is independent in $M/x$, and neither
$\{c_{1},c_{2},c_{3}\}$ nor $\{c_3,c_{4},c_{5}\}$ spans $x$ in $M$.
Hence $\{y_{1},c_{1},c_{2},c_{3}\}$ and $\{y_{2},c_3,c_{4},c_{5}\}$ are
circuits in $M/x$.
Their symmetric difference, $C=\{y_1,y_2, c_1, c_2, c_4, c_5\}$ is a disjoint union of circuits
in $M/x$.
If $C$ is not a circuit of $M/x$, there is a circuit in $C$ containing $y_{1}$, but not $y_{2}$.
The symmetric
difference of this circuit with $\{y_1,c_1,c_2,c_3\}$ is a circuit contained in $\{c_1,c_2,c_3,c_4,c_5\}$,
which is a contradiction as this set is independent in $M/x$.
Thus $C$ is a circuit of $M/x$, so $\{c_1,c_2,c_4,c_5\}$ is a circuit in $M/\{x,y_{1},y_{2}\}$.
Now \lref{lifting_circuit} implies the result.

Next we consider the case that $t=6$. 
Let $L_1=\ell_1\cup\ell_2\cup\ell_3\cup\{y_{1}\}$, $L_2=\ell_4\cup\ell_5\cup\ell_6\cup\{y_{2}\}$.
Assume there is an element $e\notin cl(L_{1}\cup L_{2})$.
Bixby's lemma says that $co(M\backslash e)$ or $si(M/e)$ is $3$-connected.
In the latter case, $(M/e)|(L_{1}\cup L_{2})=M|(L_{1}\cup L_{2})$, and there
are no parallel pairs of $M/e$ contained in $L_{1}\cup L_{2}$.
Thus $si(M/e)$ provides us with a smaller counterexample to the lemma, and we have
a contradiction.
Therefore $co(M\backslash e)$ is $3$-connected.
Assume there is a triad of $M$ that contains $e$ and an element from $L_{1}\cup L_{2}$.
Such a triad must intersect one of the lines $\ell_{1},\ldots, \ell_{6}$ or
one of $\{y_{1},c_{1},c_{2},c_{3}\}$ or $\{y_{2},c_{4},c_{5},c_{6}\}$ in a single
element, a contradiction.
Let $S$ be a set such that $co(M\backslash e)\cong M\backslash e/S$.
By the previous argument, $S\cap (L_{1}\cup L_{2})=\emptyset$.
If $X\subseteq L_{1}\cup L_{2}$, and $r(X\cup S)\ne r(X)+r(S)$, then there is a
circuit contained in $L_{1}\cup L_{2}\cup S$ that contains elements
from $S$, and any such circuit intersects a series pair of
$M\backslash e$ in a single element.
This shows that $(M\backslash e /S)|(L_{1}\cup L_{2})=M|(L_{1}\cup L_{2})$, so
$co(M\backslash e)$ is a smaller counterexample.
From this contradiction we conclude that $cl(L_{1}\cup L_{2})=E(M)$.

If $cl(L_1) \cup cl(L_{2}) = E(M)$, then  
$(cl(L_1),cl(L_{2})-cl(L_{1}))$ is a $2$-separation of $M$.
Therefore there is an element $e$ in neither $cl(L_1)$ nor $cl(L_{2})$.
Since $\{x,c_{1},\ldots,c_{6}\}$ is a basis of $M$, it follows that
$\{c_{1},\ldots, c_{6}\}$ is a basis of $M/x$, and $\{y_{1},c_{2},c_{2},c_{3}\}$
and $\{y_{2},c_{4},c_{5},c_{6}\}$ are circuits of $M/x$.
Let $C$ be the fundamental circuit of $e$ relative to
$\{c_{1},\ldots, c_{6}\}$ in $M/x$.
Then $C-e$ is not contained in $\{c_{1},c_{2},c_{3}\}$ or
in $\{c_{4},c_{5},c_{6}\}$, by our choice of $e$.
If $C'\subseteq C-e$ and $|C'|\geq 4$, then $C'$ is a circuit in
$M/((C\cup x)-C')$, and \lref{lifting_circuit} implies $M$ has an $\ag$-minor.
Therefore $|C-e|\leq 3$.
By relabelling as necessary, we can assume that either
\begin{itemize}
	\item[(a)] $C=\{e,c_1,c_4,c_5\}$, or
	\item[(b)] $C=\{e,c_1,c_4\}$.
\end{itemize}
In case (a) we let $C'$ be $C\symdiff\{y_{1},c_{1},c_{2},c_{3}\}=\{e,y_1,c_2,c_3,c_4,c_5\}$.
Otherwise we let $C'$ be  $C\symdiff\{y_{1},c_{1},c_{2},c_{3}\}\symdiff\{y_{2},c_{4},c_{5},c_{6}\}
=\{e,y_1,y_2,c_2,c_3,c_5,c_6\}$.
In either case, $C'$ is a disjoint union of circuits and $\{e,y_{1}\}\subseteq C'$.
Furthermore, $C'$ contains exactly two elements from each of $\{c_{1},c_{2},c_{3}\}$, and $\{c_{4},c_{5},c_{6}\}$.
We prove that $C'$ is a circuit of $M/x$.

Let $C''\subseteq C'$ be a circuit of $M/x$ that contains $y_{1}$.
Note that $C''\ne \{y_{1},c_{1},c_{2},c_{3}\}$, so $C''$ contains either
$e$ or $y_{2}$.
In case (a), $y_{2}\notin C'$ and $C''=C'$, for otherwise $C'-\{e,y_{1}\}= \{c_2,c_3,c_4,c_5\}$ is dependent.
Next consider case (b).
If $y_{2}\notin C''$, then $e\in C''$, and the circuit in $C'$ containing $y_{2}$ must be the unique circuit in $\{y_{2},c_{1},\ldots, c_{6}\}$,
namely $\{y_{2},c_{4},c_{5},c_{6}\}$.
This is impossible since $c_4 \notin C'$.
Therefore $y_{2}\in C''$.
If $e\notin C''$, then the circuit in $C'$ that contains $e$ must be $C$, which is impossible by the choice of $C'$.
Therefore $e,y_{2}\in C''$, so $C''=C'$ since $C'-\{e,y_{1},y_{2}\}$ is independent
in $M/x$.
Now $C'$ is a circuit in $M/x$, so $\{c_{1},\ldots, c_{6}\}$ contains a $4$-element circuit in either $M/\{x,e,y_{1}\}$ or $M/\{x,e,y_{1},y_{2}\}$.
The result follows by \lref{lifting_circuit}.  
\end{proof}

\appendix  

\section{Low rank examples}\label{lowrank}

At rank 6, the graphic matroid is the unique maximum sized matroid, but at rank 5 there
are 3 others; these three can all be described as follows: first take a basis in $GF(2)^4$ and add the third point
on every line determined by the six pairs of basis elements; clearly this is a representation of the matroid $M(K_5)$. 
Then add one further point, still in rank 4, yielding the matroid $M(K_5)^+$.

$$\left[ \begin{array}{ccccccccccccccc}
1&1&1&1&0&0&0&0&0&0&0\\
1&0&0&0&1&1&1&0&0&0&1\\
0&1&0&0&1&0&0&1&1&0&1\\
0&0&1&0&0&1&0&1&0&1&1\\
\end{array}\right]$$

Now embed $GF(2)^4$ as the hyperplane $x_1 = 0$ in $GF(2)^5$ and let $e = (1,0,0,0,0)^T$. Then construct three new matroids as follows: the first one is obtained by adding the points on the lines between $e$ and three non-collinear points in the Fano sub plane of $M(K_5)^+$. The second is obtained by adding the points on the lines between $e$ and any three original basis elements whose span is {\em not} the Fano sub plane of $M(K_5)^+$. 

The final one does not have $M(K_5)^+$ in the induced hyperplane, but one of the points is ``lifted'' into the fifth 
dimension, but rather there are {\em five} affine points, namely $e$ which is joined by long lines to three points in $M(K_5)^+$ and one additional point. The binary matrix shows it as well as a verbal explanation: the $4 \times 11$ portion of
each matrix is the original $M(K_5)^+$.

$$\left[ \begin{array}{ccccccccccc|cccc}
0&0&0&0&0&0&0&0&0&0&0&1&1&1&1\\
\hline
1&1&1&1&0&0&0&0&0&0&0&0&0&0&0\\
1&0&0&0&1&1&1&0&0&0&1&0&0&1&0\\
0&1&0&0&1&0&0&1&1&0&1&0&1&1&0\\
0&0&1&0&0&1&0&1&0&1&1&1&1&1&0\\
\end{array}\right]$$

\bigskip

$$\left[ \begin{array}{ccccccccccc|cccc}
0&0&0&0&0&0&0&0&0&0&0&1&1&1&1\\
\hline
1&1&1&1&0&0&0&0&0&0&0&0&0&1&0\\
1&0&0&0&1&1&1&0&0&0&1&0&0&0&0\\
0&1&0&0&1&0&0&1&1&0&1&0&1&0&0\\
0&0&1&0&0&1&0&1&0&1&1&1&0&0&0\\
\end{array}\right]$$

\bigskip

$$\left[ \begin{array}{ccccccccccc|cccc}
0&0&0&0&0&0&0&1&0&0&0&1&1&1&1\\
\hline
1&1&1&1&0&0&0&0&0&0&0&0&0&1&0\\
1&0&0&0&1&1&1&0&0&0&1&0&0&0&1\\
0&1&0&0&1&0&0&1&1&0&1&0&0&0&1\\
0&0&1&0&0&1&0&1&0&1&1&0&1&1&1\\
\end{array}\right]$$

\bibliographystyle{amsplain}
\bibliography{/Users/gordon/Dropbox/gordonmaster}
\end{document}